\documentclass[11pt]{amsart}
\usepackage{amssymb}
\usepackage{palatino}
\usepackage{tikz-cd}
\input amssym.def

\usepackage{xcolor}

\usepackage{amsmath, amsfonts}
\usepackage{amssymb}
\usepackage{amscd}
\usepackage[mathscr]{eucal}
\usepackage{palatino}
\usepackage{theoremref}
\setlength{\voffset}{-1cm} \setlength{\hoffset}{-1.37cm}
\setlength{\textwidth}{6.3in} \setlength{\textheight}{8.4in}
\setlength{\abovedisplayshortskip}{3mm}
\setlength{\belowdisplayshortskip}{3mm}

\newfont{\cyrr}{wncyr10}

\newcommand{\thmref}[1]{Theorem~\ref{#1}}
\newcommand{\corref}[1]{Corollary~\ref{#1}}
\newcommand{\propref}[1]{Proposition~\ref{#1}}
\newcommand{\lemref}[1]{Lemma~\ref{#1}}

\newcommand{\rmkref}[1]{Remark~\ref{#1}}

\newcommand{\Z}{{\mathbb Z}}

\def\({\left(}
\def\){\right)}
\def\[{\left[}
\def\]{\right]}

\def\N{\mathbb{N}}
\def\R{\mathbb{R}}
\def\C{\mathbb{C}}
\def\Q{\mathbb{Q}}
\def\K{\mathbb{K}}
\def\H{\mathbb{H}}
\def\cO{\mathcal{O}}

\def\cD{\mathcal{D}}

\def\cI{\mathcal{I}}

\def\ap{\alpha_{p_0}}
\def\bp{\beta_{p_0}}

\def\fp{\mathfrak{p}}
\def\fd{\mathfrak{d}}
\def\fD{\mathfrak{D}}
\def\fn{\mathfrak{n}}
\def\fm{\mathfrak{m}}

\def\fa{\mathfrak{a}}
\def\bk{\textbf{k}}
\def\bf{\textbf{f}}
\def\sa{\alpha}
\def\sb{\beta}
\def\w{\omega}
\def\afp{\alpha_{\fp}}
\def\ap{\alpha_p}
\def\bp{\beta_p}
\def\bfp{\beta_{\fp}}

\def\e{\epsilon}
\def\sg{\sigma}

\renewcommand{\mod}{{\, \rm mod \, }}

\newtheorem{thm}{Theorem}

\newtheorem{lem}[thm]{Lemma}
\newtheorem{cor}[thm]{Corollary}
\newtheorem{prop}[thm]{Proposition}
\newtheorem{rmk}{Remark}[section] 
\newtheorem{defn}{Definition}

\newtheorem{conj}{Conjecture}

\parindent=0.5cm
\footskip=0.85cm

\title[Prime factors and radicals of Fourier coefficients]{On 
the number of prime divisors and radicals of non-zero 
Fourier coefficients of Hilbert cusp forms}

\author{SUNIL L NAIK}
\address{The Institute of Mathematical Sciences, 
A CI of Homi Bhabha National Institute, 
CIT Campus, Taramani, 
Chennai 600 113, 
India.}
\email{sunilnaik@imsc.res.in}

\begin{document}

\subjclass[2010]{11F11, 11F30, 11N56}

\keywords{Prime divisors, Radicals, Fourier coefficients of eigenforms }

\maketitle

\begin{abstract}
In this article, we derive lower bounds for the number of distinct prime divisors of 
families of non-zero Fourier coefficients of non-CM primitive cusp forms and more
generally of non-CM primitive Hilbert cusp forms.  
In particular, for the Ramanujan $\Delta$-function, we show that for any $\e > 0$, 
there exist infinitely many natural numbers $n$ such that $\tau(p^n)$ has at least
$$
2^{(1-\e) \frac{\log n}{\log\log n}}
$$
distinct prime factors for almost all primes $p$. This  improves and refines the
existing bounds. 
We also study lower bounds for absolute norms of radicals of 
 non-zero Fourier coefficients of Modular forms alluded to above.
\end{abstract}

\section{Introduction}
Let $\tau(n)$ denotes the Ramanujan tau function defined by
$$
\sum_{n=1}^{\infty} \tau(n) q^n = q \prod_{n=1}^{\infty} (1-q^n)^{24}, 
$$
where $q=e^{2\pi i z}$ for $z \in \H = \{ x + iy  ~|~ x, y \in \R,~ y >0 \}$.
In \cite{SR}, Ramanujan conjectured the following properties of $\tau(n)$:
\begin{itemize}
	\item $\tau(n)$ is multiplicative i.e. $\tau(mn)= \tau(m) \tau(n), \phantom{m} \text{if } (m,n)=1$.
	\item $\tau(p^{m+1}) = \tau(p) \tau(p^m) - p^{11} \tau(p^{m-1})$, for any prime $p$ and $m \in \N$.
	\item $|\tau(p)| \leq 2 p^{11/2}$~ for all primes $p$.	
\end{itemize}
First two properties were proved by Mordell while the third property was proved by Deligne.
Ramanujan also conjectured that $\tau(n) \equiv 0 \,(\mod 691)$ for all $n$ 
except for a set of density $0$. This conjecture
was proved by Watson \cite{GW}.  Later Serre \cite{JP} proved a stronger assertion, namely 
 for any natural number $d$, $\tau(n) \equiv 0 (\mod d)$ except for a set of density $0$.
 This was further strengthened by Ram Murty and Kumar Murty in \cite{MMp} 
where they proved that $\tau(n) \equiv 0 \,(\mod d^{t})$ for almost all $n$, 
where $t = [\delta \log\log n]$ 
and $\delta >0$ is a constant depending only on $d$. Results of Serre and Murty-Murty 
also holds for normalized Hecke eigenforms of integral weight and higher level 
having integer Fourier coefficients.

Before moving further let us fix few  conventions for densities.  Let $P(n)$ be a property
for natural numbers $n$. We say $P$ holds for almost all $n$ if
$$
\lim_{x \to \infty} \frac{\#\{n \le x : P(n)\,\, \rm {holds}\}}{x}  = 1.
$$

On the other hand , we say $P$ holds for almost all primes if
$$
\lim_{x \to \infty} \frac{\#\{p \le x : \,\, p \,\, {\rm prime}, \,\, P(p)\,\, \rm {holds}\}}{\frac{x}{{\log x}}} = 1.
$$

\smallskip

For any integer $n$, let $\w(n)$ denotes the number of distinct prime factors 
of $n$ with the convention that $\w(0)=\w(\pm 1) = 0$.
It is an interesting albeit difficult theme in Number theory to study the  behaviour of  $\w(f(n))$ for 
arithmetic functions $f$ taking integral values. When $f(n)=n$, Hardy 
and Ramanujan \cite{HR} proved that $\w(n)$ has normal order $\log\log n$. 
The case when $f(n)$'s are Fourier coefficients of Hecke eigenforms
is considered in \cite{MMp}. Assuming Generalized 
Riemann hypothesis,  Murty-Murty  \cite{MMp} proved that $\w(\tau(n))$ 
has normal order $\frac{1}{2} (\log\log n)^{2}$. More precisely for any $\e > 0$, 
\begin{equation}\label{normal}
\(\frac{1}{2}-\e\) (\log\log n)^2 < \w(\tau(n)) < \(\frac{1}{2}+\e\) (\log\log n)^2
\end{equation}
holds for almost all $n$.

\smallskip
On the other hand,  Luca and Shparlinski \cite{LS} showed that for any $\e > 0$,
\begin{equation}\label{LSh}
\w(\tau(n)) \geq (1-\e) \log\log n
\end{equation}
for infinitely many natural numbers $n$. 
In this article, we first prove the following.
\begin{thm}\label{tau}
For any $\e >0 $, there exists infinitely many natural
numbers $n$ such that
$$
\w(\tau(p^n)) \geq 2^{(1-\e) \frac{\log n}{\log\log n}}
$$
for almost all primes $p$.
\end{thm}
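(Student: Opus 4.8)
The plan is to realize $\tau(p^{n})$ as a term of a Lucas sequence attached to $p$, and then to extract one distinct prime factor from each divisor of $n+1$ by means of the primitive divisor theorem.

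Write $\ap,\bp$ for the roots of $X^{2}-\tau(p)X+p^{11}$; by Deligne's bound these are complex conjugate of absolute value $p^{11/2}$, so in particular $\ap\neq\bp$. Iterating the Hecke recursion gives
$$
\tau(p^{n})=\frac{\ap^{\,n+1}-\bp^{\,n+1}}{\ap-\bp}=:u_{n+1}(p),
$$
and the sequence $m\mapsto u_{m}(p)$ satisfies the divisibility relation $u_{a}(p)\mid u_{b}(p)$ whenever $a\mid b$. First I would restrict to the set $S$ of primes $p$ with $p\nmid\tau(p)$; its complement (the non-ordinary primes of $\Delta$) has density $0$, so $S$ has density $1$ among all primes. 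For $p\in S$ the integers $\ap+\bp=\tau(p)$ and $\ap\bp=p^{11}$ are nonzero and coprime, and $\ap/\bp$ is not a root of unity: otherwise the rational number $c:=\tau(p)^{2}/p^{11}=2+\ap/\bp+\bp/\ap$ would also be an algebraic integer, hence an ordinary integer, which forces $p\mid\tau(p)$ (or $\tau(p)=0$), contradicting $p\in S$. Hence $(\ap,\bp)$ is a non-degenerate Lucas pair for every $p\in S$.

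Next I would invoke the Bilu--Hanrot--Voutier theorem on primitive divisors: for every $m>30$ the integer $u_{m}(p)$ has a primitive prime divisor $\ell_{m}(p)$, i.e. a prime dividing $u_{m}(p)$ but none of $u_{1}(p),\dots,u_{m-1}(p)$; the primes $\ell_{m}(p)$ are then pairwise distinct as $m$ varies. Consequently, for any $n$, each divisor $d\mid n+1$ with $d>30$ supplies a prime $\ell_{d}(p)\mid u_{d}(p)\mid u_{n+1}(p)=\tau(p^{n})$, and these primes are distinct, so
$$
\w(\tau(p^{n}))\ \geq\ \#\{\,d\mid n+1:\ d>30\,\}\ \geq\ \#\{\,d:\ d\mid n+1\,\}-30 .
$$
It is essential here that both the constant $30$ and the exceptional set $S^{c}$ are independent of $n$.

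Finally I would choose $n$ suitably. Let $q_{1}<q_{2}<\cdots$ enumerate the primes and set $n_{k}+1=q_{1}q_{2}\cdots q_{k}$, so that $\#\{d:d\mid n_{k}+1\}=2^{k}$ and therefore $\w(\tau(p^{n_{k}}))\geq 2^{k}-30\geq 2^{k-1}$ for $k\geq 6$ and every $p\in S$. By the prime number theorem $\log(n_{k}+1)=\theta(q_{k})=(1+o(1))q_{k}$ and $k=\pi(q_{k})=(1+o(1))q_{k}/\log q_{k}$, whence $\log n_{k}/\log\log n_{k}=(1+o(1))k$; given $\e>0$, for all large $k$ one has $k-1\geq(1-\e)\log n_{k}/\log\log n_{k}$, and the theorem follows for the infinitely many integers $n=n_{k}$. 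The only genuinely non-elementary inputs are the density-zero estimate for non-ordinary primes and the Bilu--Hanrot--Voutier theorem; the point demanding the most care is the uniformity in $p$ of every estimate, which is precisely what allows the phrase ``for almost all primes $p$'' to survive after fixing $n=n_{k}$ and letting $k\to\infty$.
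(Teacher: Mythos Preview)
Your overall strategy is precisely the paper's: realize $\tau(p^{n})=u_{n+1}$ for the Lucas-type sequence attached to $\ap,\bp$, use $u_a\mid u_b$ for $a\mid b$ together with Bilu--Hanrot--Voutier to produce one new prime per divisor of $n+1$ exceeding $30$, and then take $n+1$ to be a primorial and invoke the prime number theorem. All of that is correct and matches the paper's argument essentially line for line.

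There is, however, a genuine gap in the step where you pass to ``almost all primes''. You restrict to $S=\{p:p\nmid\tau(p)\}$ and assert that the non-ordinary primes of $\Delta$ have density $0$. This is a well-known \emph{open} problem: for non-CM newforms of weight $\geq 4$ (and for $\Delta$ in particular) it is expected but not proved that the ordinary primes have density $1$. Your argument that $\ap/\bp$ is not a root of unity when $p\in S$ is fine, and for such $p$ the pair $(\ap,\bp)$ is indeed a genuine Lucas pair; the difficulty is solely that you cannot justify that $S$ has density $1$.

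The paper avoids this by working with the larger set $P_{\tau}$ of primes for which $\gamma_{\tau,p}=\ap/\bp$ is not a root of unity. That set \emph{does} have density $1$: for all sufficiently large $p$, if $\gamma_{\tau,p}$ were a primitive $m$-th root of unity with $m>1$ then $\tau(p^{m-1})=0$, which by \lemref{afpn0} forces $\tau(p)=0$, and the primes with $\tau(p)=0$ have density $0$ by Sato--Tate. For $p\in P_{\tau}$ one may still have $p\mid\tau(p)$, so $(\ap,\bp)$ need not be coprime; the paper fixes this by setting $A_p=\ap/p^{\nu}$, $B_p=\bp/p^{\nu}$ with $\nu=\nu_p(\tau(p))\le 5$, so that $(A_p,B_p)$ is a Lucas pair and $\tau(p^{n})=p^{n\nu}\cdot u_{n+1}(A_p,B_p)$. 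From that point on your argument applies verbatim, with the uniform bound $30$ intact.
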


\begin{rmk}\label{rmk1.1}
From \thmref{tau}, it follows that
$$
\w(\tau(m)) \geq 2^{(1-\e)\frac{\log\log m}{\log\log\log m}}
$$
for infinitely many natural numbers $m$. This improves the bound \eqref{LSh} 
deduced by Luca and Shparlinski.
\end{rmk}

\begin{rmk} 
It is also worthwhile to compare our bounds with that in \eqref{normal}. 
The results of Murty-Murty shows that the bound
$$
\w(\tau(n)) < \(\frac{1}{2}+\e\) (\log\log n)^2
$$
holds for almost all $n$, subject however to Generalized Riemann Hypothesis.
On the other hand, we unconditionally produce a sparse infinite subsequence $\{n_i\}$  
such that
$$
\w(\tau(n_i)) \geq 2^{(1-\e)\frac{\log\log n_i}{\log\log\log n_i}} \,.
$$
This in particular shows that the upper bound in \eqref{normal}
cannot hold for all but finitely many $n$.
\end{rmk}
\begin{rmk} 
	If Lehmer's question is answered in affirmative, namely $\tau(n) \ne 0$
		for all natural numbers $n$, then \thmref{tau} will hold for all primes.
\end{rmk}

We also prove following general theorem .

\begin{thm}\label{EHE}
Let $f$ be a non-CM primitive cusp form of weight $k$, level $N$ with trivial character
having Fourier coefficients $\{ a_f(n) ~|~ n \in \N \}$. Also 
$\K_f = \Q(\{a_f(n) ~|~n\in \N \})$ be the associated number field
and $\cO_{\K_{f}}$ be its ring of integers.
Let $\w_f(a_f(n))$ be the number of distinct prime ideals dividing 
$a_f(n) \cO_{\K_f}$. Then there exists a subset $P_f$ of primes of density one
such that for any $p \in P_f$ and any $\e > 0$,  there exists infinitely many natural numbers $n$ such that
$$
\w_f(a_f(p^n)) \geq 2^{(1-\e) \frac{\log n}{\log\log n}}.
$$
Here by a `primitive cusp form', we mean normalized Hecke eigen cusp form 
lying in the newform space. As before, we follow the convention that $\omega_f(u)=0$ if $u=0$ or a unit in $\mathcal{O}_{\mathbb{K}_f}$. 
\end{thm}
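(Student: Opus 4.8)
The plan is to reduce the problem to counting prime divisors of values of Chebyshev-like polynomials at Hecke eigenvalues, and then to force many prime divisors by a Zsygmondy/primitive-prime-divisor argument combined with an averaging over $p$. Recall that for a primitive form $f$ of weight $k$, writing $a_f(p)=\afp=\alpha_{\fp}+\beta_{\fp}$ with $\alpha_{\fp}\beta_{\fp}=p^{k-1}$ (so $\alpha_{\fp},\beta_{\fp}$ are the Satake parameters, algebraic integers in a quadratic extension of $\K_f$), the Hecke recursion gives
\[
a_f(p^n)=\frac{\alpha_{\fp}^{\,n+1}-\beta_{\fp}^{\,n+1}}{\alpha_{\fp}-\beta_{\fp}}=:\Phi_{n+1}(\alpha_{\fp},\beta_{\fp}),
\]
a homogeneous polynomial with integer coefficients. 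The key elementary identity is the multiplicative factorization
\[
\frac{X^{m}-Y^{m}}{X-Y}=\prod_{d\mid m,\ d>1}\Psi_d(X,Y),
\]
where $\Psi_d$ is the homogenized $d$-th cyclotomic polynomial; hence $a_f(p^n)\mathcal O_{\K_f}$ is divisible by the product over $d\mid n+1$, $d>1$, of the ideals generated by $\Psi_d(\alpha_{\fp},\beta_{\fp})$. So if one can show that for suitable $n$ and for almost all $p$ the "cyclotomic" factors $\Psi_d(\alpha_{\fp},\beta_{\fp})$ for distinct $d\mid n+1$ are pairwise coprime as ideals of $\K_f$ (or $\cO_{\K_f}$) up to a bounded set of exceptional primes, then
\[
\omega_f(a_f(p^n))\ \ge\ \#\{\,d\mid n+1:\ d>1,\ \Psi_d(\alpha_{\fp},\beta_{\fp})\notin\cO_{\K_f}^{\times}\,\}-O(1),
\]
and choosing $n+1$ to be a product of the first $r$ primes (a primorial) makes the right side at least $2^{r}-O(1)$, while $\log(n+1)\sim p_r\sim r\log r$ by the prime number theorem, giving $2^{r}\ge 2^{(1-\e)\log n/\log\log n}$ as required.

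The main obstacle is exactly the coprimality/non-vanishing bookkeeping, and this is where the non-CM hypothesis and the averaging over $p$ enter. First, $\Psi_d(\alpha_{\fp},\beta_{\fp})$ can fail to be a nonunit essentially only when $\alpha_{\fp}/\beta_{\fp}$ is a root of unity, which for non-CM $f$ happens for at most a density-zero set of $p$ (one can quantify: $\alpha_{\fp}/\beta_{\fp}$ a primitive $d$-th root of unity forces $\afp^2/p^{k-1}$ to take one of finitely many algebraic values, cutting out a thin set by Sato--Tate or by a Chebotarev/Rankin--Selberg argument); so define $P_f$ to be the primes avoiding all these bad congruence/Sato--Tate conditions simultaneously — this is still a set of density one since it is a countable intersection handled by a diagonal argument over the finitely many $d$ relevant to each $n$. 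Second, a common prime ideal $\mathfrak q$ of $\Psi_{d}(\alpha_{\fp},\beta_{\fp})$ and $\Psi_{d'}(\alpha_{\fp},\beta_{\fp})$ for $d\ne d'$ must divide a bounded "resultant-type" quantity: modulo $\mathfrak q$, $\alpha_{\fp}/\beta_{\fp}$ would be simultaneously a $d$-th and a $d'$-th root of unity, hence its order divides $\gcd(d,d')$, so $\mathfrak q$ lies above a rational prime dividing $\operatorname{Res}(\Psi_d,\Psi_{d'})$ which divides $dd'$ — the standard cyclotomic fact that $\Psi_d$ and $\Psi_{d'}$ can only share a prime factor $\ell$ with $d/d'$ a power of $\ell$. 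Thus all overlaps are concentrated at primes dividing $n+1$, an $O(\log n)$ set, so after discarding these we lose only $O(\log\log n)=o(2^{r})$ many $d$'s, which is negligible against $2^{r}$.

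So the steps in order are: (i) record the Satake/Hecke setup and the cyclotomic factorization of $a_f(p^n)$; (ii) prove, using non-CM-ness, that the set of $p$ for which some $\Psi_d(\alpha_{\fp},\beta_{\fp})$ ($d$ in a prescribed finite range) is a unit or shares a prime with another such factor outside the "expected" primes dividing $n+1$ has density zero, and take $P_f$ to be the complementary density-one set via a diagonalization; (iii) fix $\e>0$, take $n+1$ to be the primorial of the first $r$ primes, so $n+1$ has $2^{r}$ divisors and, for $p\in P_f$, at least $2^{r}-O(r^{2})$ of the ideals $\Psi_d(\alpha_{\fp},\beta_{\fp})\cO_{\K_f}$ are nontrivial and pairwise coprime; (iv) conclude $\omega_f(a_f(p^n))\ge 2^{r}-O(r^{2})\ge 2^{(1-\e)\log n/\log\log n}$ using $\log n\sim r\log r$ and letting $r\to\infty$ along the sequence of primorials. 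The hard part is genuinely step (ii) — controlling uniformly in the relevant range of $d$ the thin exceptional sets of primes — since one needs an effective-enough input (Sato--Tate with a power-saving error term, or a Chebotarev density argument for the $\ell$-adic representations attached to $f$, using that the image is large precisely because $f$ is non-CM) to know the density-zero sets really are avoided by a full-density $P_f$; the purely combinatorial parts (iii)--(iv) are routine given (ii). Theorem \ref{tau} is then the special case $f=\Delta$, $k=12$, $\K_f=\Q$, where $\omega_f$ is the usual $\omega$ and "density one set of primes'' becomes "almost all primes''.
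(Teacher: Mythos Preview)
Your overall architecture --- express $a_f(p^n)$ as a Lucas-type term $(\ap^{n+1}-\bp^{n+1})/(\ap-\bp)$, factor cyclotomically, and take $n+1$ a primorial so that the divisor count $d(n+1)=2^r$ gives the stated bound --- is the same as the paper's. The paper's proof differs from yours only in how it handles your step (ii): rather than arguing coprimality of the factors $\Psi_d(\ap,\bp)$ from scratch, it simply invokes Schinzel's primitive divisor theorem (\thmref{primdiv}), which guarantees that for every $n$ larger than a threshold $n_0$ depending on $p$, the term $u_n=(\ap^n-\bp^n)/(\ap-\bp)$ has a prime ideal divisor not dividing any earlier $u_m$. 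Combined with $u_t\mid u_n$ whenever $t\mid n$, this immediately yields $\w_f(u_n)\ge d(n)-O_p(1)$. The set $P_f$ is then just the density-one set of primes with $\ap/\bp$ not a root of unity (\lemref{gunity}); no diagonalization over $d$ is needed.

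Your step (ii) as written has a genuine gap. The assertion that ``$\Psi_d(\ap,\bp)$ can fail to be a nonunit essentially only when $\ap/\bp$ is a root of unity'' conflates vanishing with being a unit: one has $\Psi_d(\ap,\bp)=0$ iff $\ap/\bp$ is a primitive $d$-th root of unity, but for a \emph{fixed} $p$ with $\ap/\bp$ not a root of unity it is entirely possible that $\Psi_d(\ap,\bp)$ is a unit, or more to the point that all of its prime ideal divisors lie over rational primes dividing $d$ (in which case your coprimality bookkeeping, though correct, yields nothing). Ruling this out for all large $d$ is exactly the content of the primitive divisor theorem, and its proof rests on Baker's theorem on linear forms in logarithms (to bound $|\Psi_d(\ap,\bp)|$ from below), not on Sato--Tate or Chebotarev for varying $p$. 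So the ``hard part'' you flag is real, but you have misidentified where it lives: it is a Diophantine statement for each fixed $p\in P_f$, not a density statement over $p$, and the effective Sato--Tate or large-image inputs you propose do not address it. Once you cite \thmref{primdiv}, the remaining combinatorics (your steps (iii)--(iv)) go through exactly as you say.
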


We then apply our results to derive  lower bounds for absolute norms of 
radicals of certain Fourier coefficients of non-CM primitive cusp forms  
$f$. Let $Q(n)$ denotes the radical of $n$, i.e. 
$Q(n) = \prod_{p | n} p$. More generally, for an integral ideal 
 $\cI$ of the ring of integers $\cO_{K}$ of a number field 
 $K$, radical of $\cI$  is given by
$$
rad(\cI) = \prod_{\fp ~|~ \cI} \fp
$$
where $\fp$ runs over distinct prime ideals containing $\cI$.
For any $\sa \in \cO_{K}$, let  $Q_K(\sa)$ denotes the absolute norm of radical of 
$\sa\cO_{K}$ given by
$$
Q_K(\sa) = \prod_{\fp | \sa \cO_K} N_K(\fp)
$$
where $\fp$ runs over distinct prime ideal divisors of $\sa \cO_K$ and $N_K$ 
denotes the absolute norm on $K$. Here we have the convention that $Q_K(0) = 1$.

As a consequence of our above theorems, we have the following corollary.
\begin{cor}\label{cor1}
For any  $\e > 0$, there exist infinitely many natural numbers $n$ such that
$$
\log Q(\tau(p^n)) \geq 2^{(1-\e) \frac{\log n}{\log\log n}},
$$
for almost all primes $p$. More generally, if $f$ is as in \thmref{EHE}, then for any $p \in P_f$ and any $\e > 0$, there exist infinitely many natural numbers $n$ such that
$$
\log Q_f(a_f(p^n)) \geq 2^{(1-\e) \frac{\log n}{\log\log n}}.
$$
Here $Q_f$ is a simplified notation for $Q_{K_f}$.
\end{cor}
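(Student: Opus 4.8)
The plan is to deduce \corref{cor1} directly from \thmref{tau} and \thmref{EHE}, using only the elementary fact that a non-zero element (or ideal) with many prime divisors has a radical of large norm. First I would reduce to counting prime divisors. Since every rational prime is at least $2$, one has $Q(\tau(p^n)) = \prod_{q \mid \tau(p^n)} q \ge 2^{\w(\tau(p^n))}$, hence $\log Q(\tau(p^n)) \ge (\log 2)\,\w(\tau(p^n))$. Likewise, over the number field $\K_f$ every prime ideal $\fp$ of $\cO_{\K_f}$ has norm $\#(\cO_{\K_f}/\fp) \ge 2$, so $Q_f(a_f(p^n)) \ge 2^{\w_f(a_f(p^n))}$ and therefore $\log Q_f(a_f(p^n)) \ge (\log 2)\,\w_f(a_f(p^n))$. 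Observe that whenever the right-hand side is positive, $a_f(p^n)$ is automatically neither $0$ nor a unit, so the conventions $Q_f(0)=1$, $\w_f(0)=0$ cause no trouble.

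Then I would absorb the constant $\log 2$ by slightly perturbing $\e$. Fix $\e>0$ and apply \thmref{EHE} (resp.\ \thmref{tau}) with $\e/2$ in place of $\e$: for the same density-one set $P_f$ of primes (resp.\ for almost all primes $p$) there are infinitely many $n$ with $\w_f(a_f(p^n)) \ge 2^{(1-\e/2)\frac{\log n}{\log\log n}}$. Writing $t = \frac{\log n}{\log\log n}$, the inequality $(\log 2)\,2^{(1-\e/2)t} \ge 2^{(1-\e)t}$ is equivalent to $(\log 2)\,2^{\e t/2} \ge 1$, i.e.\ to $t \ge \tfrac{2}{\e}\log_2\!\bigl(\tfrac{1}{\log 2}\bigr)$, which holds for every $n$ exceeding some $N_0 = N_0(\e)$ since $t \to \infty$. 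Because the set of admissible $n$ is infinite, infinitely many of them satisfy $n \ge N_0$, and for each such $n$,
$$
\log Q_f(a_f(p^n)) \;\ge\; (\log 2)\,\w_f(a_f(p^n)) \;\ge\; (\log 2)\,2^{(1-\e/2)\frac{\log n}{\log\log n}} \;\ge\; 2^{(1-\e)\frac{\log n}{\log\log n}},
$$
which is the second assertion of the corollary; running the same computation with $\tau$ in place of $a_f$ and $\w$ in place of $\w_f$ gives the first.

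Since this is a short deduction, I do not anticipate any genuine obstacle beyond the $\e$-bookkeeping above; the substantive content lies entirely in \thmref{tau} and \thmref{EHE}. The only point worth stating carefully is that the set of primes and the family $\{n_i\}$ over which the conclusion holds are inherited verbatim from those theorems, so no independent density argument is needed.
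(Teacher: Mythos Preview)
Your argument is correct. The only difference from the paper's proof is in the elementary step converting a lower bound on $\w$ into one on $\log Q$. You use the trivial inequality $Q(m)\ge 2^{\w(m)}$ (and its number-field analogue $N_K(\fp)\ge 2$), which already suffices after an $\e/2$-shift. The paper instead invokes the prime number theorem (resp.\ Landau's prime ideal theorem): writing $p_i$ for the $i$-th prime, one has $Q(m)\ge \prod_{i\le \w(m)} p_i$, whence $\log Q(m)\ge (1+o(1))\,\w(m)\log\w(m)$, and similarly over $\K_f$. This yields the sharper intermediate bound $\log Q \gtrsim \w\log\w$ rather than $\log Q \ge (\log 2)\,\w$, but that extra factor of $\log\w$ is not needed for the corollary as stated, so your more elementary route loses nothing here. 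Either way the set of primes and the infinite family of $n$ are inherited directly from \thmref{tau} and \thmref{EHE}, exactly as you note.
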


\begin{rmk}
\corref{cor1} also follows for the work of Stewart \cite[Eq. 16]{Strad}.
\end{rmk}

We then extend our study to the context of Hilbert modular forms. The detailed statements
of our results in the realms of Hilbert modular forms are described in the next section.

\smallskip

Finally, a refined version of ABC-conjecture proposed by Frey naturally enters 
our set up. For instance,  we  improve \corref{cor1} by appealing
to this conjecture. We invite the reader to look in the next sections for
the statement of Frey's conjecture as well the ensuing consequences.

\smallskip

Let us end the introduction with  a brief outline of our article.
In  \S2, we list the statements of our results for Hilbert modular forms as
 well as the applications of Frey's ABC-conjecture.
 In  \S3 , we recall the relevant notions and notations for elliptic and Hilbert modular forms
 as well as  heights of an algebraic number. We shall also indicate the
 prerequisites for Lucas sequences 
 and the  statement of Frey's ABC-conjecture. Finally
 proof of all the results are given in \S4 -- \S7.

\section{Statements of  Results for Hilbert Modular forms}

In this section, we  state our results for Hilbert Modular  forms.
Please see the next section for description and clarifications of
the terms and terminologies in these statements.

\begin{thm}\label{thm1}
Let $F \subset \R$ be a Galois extension of $\Q$ of odd degree with ring of integers $\cO_F$
and $\fn$ be an integral ideal of $\cO_F$. 
Also let $\bf$ be a non-CM primitive Hilbert cusp form of weight 
$2\bk = \underset{n~ times}{\underbrace{(2k,\cdots,2k)}}$ and level 
$\fn$ with trivial character. Further suppose that $\bf$ has Fourier coefficients $\{ c(\fm)\}$ 
at each integral ideal $\fm \subset \cO_F$. As before, $\K_{\bf}= \Q(\{ c(\fm) ~|~ \fm \subset \cO_F\})$,
$\cO_{\K_{\bf}}$ is the ring of integers of $\K_{\bf}$
and $\w_{\bf}(c(\fm))$ denotes the number of distinct prime ideals dividing $c(\fm)\cO_{\K_{\bf}}$. Then there exist a subset $P_{\bf}$ of prime ideals in $\cO_{F}$ of density one such that for any $\fp \in P_{\bf}$ and any $\e > 0$, there exists infinitely many natural numbers $n$ such that
$$
\w_{\bf}(c({\fp}^n)) \geq 2^{(1-\e) \frac{\log n}{\log\log n}}.
$$
Here $\omega_{\textbf{f}}(u)=0$ if $u=0$ or a unit in $\mathcal{O}_{\mathbb{K}_{\textbf{f}}}$.
\end{thm}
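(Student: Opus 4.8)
The plan is to reduce \thmref{thm1} to an analysis of the Lucas-type recurrence satisfied by the Fourier coefficients $c(\fp^n)$ at a fixed good prime ideal $\fp$, following the same strategy that underlies \thmref{EHE} and \thmref{tau}. For a non-CM primitive Hilbert cusp form $\bf$ of parallel weight $2\bk$, for each prime $\fp$ not dividing $\fn$ the coefficients obey $c(\fp^{n+1}) = c(\fp) c(\fp^n) - N(\fp)^{2k-1} c(\fp^{n-1})$, so setting $\alpha_{\fp}, \beta_{\fp}$ to be the roots of $X^2 - c(\fp) X + N(\fp)^{2k-1}$ we get $c(\fp^n) = \frac{\alpha_{\fp}^{n+1} - \beta_{\fp}^{n+1}}{\alpha_{\fp} - \beta_{\fp}}$, a Lucas sequence in $\cO_{\K_{\bf}}$ (after adjoining $\alpha_{\fp}$, working in a quadratic extension $L_{\fp}/\K_{\bf}$ if necessary). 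The key point is that the index-set $\{1, 2, \dots, n\}$ can be covered by many divisors, and by the divisibility property of Lucas sequences, $c(\fp^{d-1}) \mid c(\fp^{n-1})$ whenever $d \mid n$; distinct prime-power divisors $d$ of $n$ then contribute (after controlling the contribution of the ``primitive part'' or using a primitive divisor theorem) distinct prime ideals to $c(\fp^{n-1})\cO_{\K_{\bf}}$. Taking $n$ to be a product of the first $r$ primes (or an appropriate highly composite number), the number of divisors of $n$ is roughly $2^r \approx 2^{(1-\e)\log n/\log\log n}$, which yields the stated lower bound for $\w_{\bf}(c(\fp^n))$.

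First I would fix the set $P_{\bf}$: it should consist of the prime ideals $\fp \nmid \fn$ for which $c(\fp) \neq 0$ and for which the Lucas sequence $\{c(\fp^n)\}_n$ is nondegenerate, i.e. $\alpha_{\fp}/\beta_{\fp}$ is not a root of unity. Since $\bf$ is non-CM, the Sato--Tate / equidistribution results for Hilbert modular forms (or, more elementarily, the fact that only finitely many or a density-zero set of primes can have $\alpha_{\fp}/\beta_{\fp}$ a root of unity, together with the non-vanishing of $c(\fp)$ outside a density-zero set) guarantee that $P_{\bf}$ has density one among prime ideals of $\cO_F$. I would then invoke a primitive divisor theorem for Lucas sequences over number fields — the analogue of the Bilu--Hanrot--Voutier theorem, or the weaker effective statement of Stewart that for all but boundedly many indices $m$ the term $c(\fp^{m-1})$ has a primitive prime ideal divisor — to ensure that, for $n$ large enough, each sufficiently large prime-power divisor $d$ of $n$ gives a prime ideal dividing $c(\fp^{d-1})$ but not $c(\fp^{d'-1})$ for smaller $d'$. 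Counting these primitive divisors over the prime-power divisors $d$ of $n$ gives at least (number of distinct prime-power divisors of $n$) $-\,O(1)$ distinct prime ideals of $c(\fp^n)\cO_{\K_{\bf}}$.

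To get the claimed exponent I would choose $n$ from the sequence $n_r = \prod_{i \le r} p_i$, so that $n_r$ has $2^r$ divisors and $\log n_r \sim p_r \sim r \log r$ by the prime number theorem; hence $r \ge (1-\e)\frac{\log n_r}{\log\log n_r}$ for $r$ large, and the number of prime-power divisors of $n_r$ (which is $\sum_{i \le r} 1 = r$, or $2^r - 1$ divisors overall, whichever normalization the counting argument needs) still dominates $2^{(1-\e)\log n_r/\log\log n_r}$ after adjusting $\e$. (If the argument needs \emph{all} divisors rather than prime-power divisors to carry distinct primitive primes, one restricts to squarefree $n_r$ and uses that $d \mid n_r$ with $d$ ranging over the $2^r$ divisors all give distinct $c(\fp^{d-1})$; the BHV-type theorem then handles all but finitely many of them.) The passage from elliptic to Hilbert modular forms here is essentially formal once the Hecke recurrence at $\fp$ and the integrality of $c(\fp^n)$ in $\cO_{\K_{\bf}}$ are in hand; the oddness of $[F:\Q]$ and the Galois hypothesis are presumably used only to guarantee that $\bf$ admits a good newform theory with the stated Hecke relations and a base-change/existence of a suitable Galois representation, not in the divisibility combinatorics itself.

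The main obstacle I anticipate is establishing the primitive divisor input in the required generality and uniformly in $\fp$: the BHV theorem is stated for Lucas sequences over $\Q$ (or $\overline{\Q}$ with rational parameters), and over a general number field $\K_{\bf}$ one must either appeal to Stewart's more robust but less explicit results on primitive divisors of Lucas sequences in number fields, or argue directly that the ideals $c(\fp^{d-1})\cO_{\K_{\bf}}$ for distinct divisors $d\mid n$ cannot all share their prime factors among a bounded set — using lower bounds for linear forms in logarithms (Baker's theory) to bound the number of ``defective'' indices independently of $\fp$. One has to be careful that the exceptional bound is absolute (independent of $n$ and of $\fp \in P_{\bf}$), so that for $n$ large the loss of $O(1)$ primes is negligible against the $2^r$ divisors; this uniformity is the delicate point, and I would address it by quoting the relevant effective result of Stewart (as the remark after \corref{cor1} already signals reliance on Stewart's work) rather than reproving it.
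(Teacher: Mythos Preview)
Your proposal is correct and follows essentially the same route as the paper: the Hecke recurrence gives $c(\fp^{n})=(\alpha_{\fp}^{n+1}-\beta_{\fp}^{n+1})/(\alpha_{\fp}-\beta_{\fp})$, the density-one set $P_{\bf}$ is defined exactly as you say (primes with $\alpha_{\fp}/\beta_{\fp}$ not a root of unity, supplied by Sato--Tate for Hilbert forms), and one takes $n$ to be the product of the first $r$ rational primes so that the $2^{r}$ divisors of $n$ each contribute a primitive prime ideal divisor once past a finite threshold.

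Two small clarifications. First, the primitive divisor input the paper uses is Schinzel's theorem for $A^{n}-B^{n}$ with $A,B$ algebraic integers (\thmref{primdiv}), which already works over number fields; you do not need the full BHV classification or an ad hoc Baker-type argument. Second, your worry about uniformity in $\fp$ is unnecessary: the theorem fixes $\fp\in P_{\bf}$ \emph{before} choosing the infinitely many $n$, so the threshold $n_{0}$ from the primitive divisor theorem is allowed to depend on $\fp$ (through $[\Q(\alpha_{\fp}/\beta_{\fp}):\Q]$ and the ideal $(\alpha_{\fp},\beta_{\fp})$). This removes the ``delicate point'' you flagged. Finally, since $n$ is squarefree you should simply count all $2^{r}$ divisors rather than prime-power divisors; the paper does exactly this, obtaining $\omega_{\bf}(c(\fp^{n-1}))\ge d(n)+O(1)=2^{\pi(r)}+O(1)$.
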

 
As a consequence of \thmref{thm1}, we derive the following corollary. 
\begin{cor}\label{cor2}
Let $\bf$ be as in \thmref{thm1}. For any  $\fp \in P_{\bf}$ and any $\e > 0$, there exist infinitely many natural numbers $n$ such that
$$
\log Q_{\bf}(c({\fp}^n)) \geq 2^{(1-\e) \frac{\log n}{\log\log n}}.
$$
Here $Q_{\bf}$ is a simplified notation of $Q_{\K_{\bf}}$.
\end{cor}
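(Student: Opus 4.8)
The plan is to deduce Corollary~\ref{cor2} directly from Theorem~\ref{thm1}, by comparing the norm of the radical with the number of prime ideal divisors. Recall that by definition
$$
Q_{\bf}(c(\fp^n)) \;=\; \prod_{\fP \,\mid\, c(\fp^n)\cO_{\K_{\bf}}} N_{\K_{\bf}}(\fP),
$$
the product running over the distinct prime ideals $\fP$ of $\cO_{\K_{\bf}}$ dividing $c(\fp^n)\cO_{\K_{\bf}}$. Every such prime ideal lies above a rational prime, so its absolute norm is at least $2$; hence each factor in the product is $\geq 2$, which gives
$$
Q_{\bf}(c(\fp^n)) \;\geq\; 2^{\,\w_{\bf}(c(\fp^n))} \qquad\text{and so}\qquad \log Q_{\bf}(c(\fp^n)) \;\geq\; (\log 2)\,\w_{\bf}(c(\fp^n)).
$$

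Next I would invoke Theorem~\ref{thm1}. Fix $\fp \in P_{\bf}$ and $\e > 0$, and choose any $\e'$ with $0 < \e' < \e$. Theorem~\ref{thm1} produces infinitely many natural numbers $n$ for which
$$
\w_{\bf}(c(\fp^n)) \;\geq\; 2^{(1-\e')\frac{\log n}{\log\log n}}.
$$
For all sufficiently large $n$ the right-hand side exceeds $0$, so along this sequence $c(\fp^n)$ is automatically neither zero nor a unit and the convention $Q_{\bf}(0)=1$ causes no difficulty. Combining the two displays, we obtain
$$
\log Q_{\bf}(c(\fp^n)) \;\geq\; (\log 2)\, 2^{(1-\e')\frac{\log n}{\log\log n}}
$$
for infinitely many $n$.

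It then remains only to absorb the constant $\log 2$. Writing $x = \frac{\log n}{\log\log n}$, the inequality $(\log 2)\,2^{(1-\e')x} \geq 2^{(1-\e)x}$ is equivalent, after taking logarithms, to $\log_2(\log 2) \geq (\e' - \e)\,x$, i.e.\ to $x \geq \log_2(1/\log 2)/(\e - \e')$; since $x \to \infty$ along our sequence this holds for all but finitely many of these $n$. Discarding those finitely many exceptions yields infinitely many $n$ with $\log Q_{\bf}(c(\fp^n)) \geq 2^{(1-\e)\frac{\log n}{\log\log n}}$, as claimed.

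There is essentially no genuine obstacle here: the arithmetic content of Corollary~\ref{cor2} is already packaged in Theorem~\ref{thm1}, and the only point that takes a moment of care is the passage from $\w_{\bf}$ to $\log Q_{\bf}$, where the harmless multiplicative constant $\log 2$ is traded against an arbitrarily small loss in the exponent by shrinking $\e$ and ignoring a finite initial segment of the sequence. (The identical argument, applied instead to Theorems~\ref{tau} and~\ref{EHE}, reproves Corollary~\ref{cor1}.)
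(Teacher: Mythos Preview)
Your argument is correct and follows the same overall plan as the paper: use \thmref{thm1} to bound $\w_{\bf}(c(\fp^n))$ from below, then convert that into a lower bound for $\log Q_{\bf}(c(\fp^n))$. The one noteworthy difference is in the conversion step. You use the trivial estimate $N_{\K_{\bf}}(\fP)\geq 2$ for each prime ideal, giving $\log Q_{\bf}(\alpha)\geq(\log 2)\,\w_{\bf}(\alpha)$, and then spend a small $\e'\!<\!\e$ to absorb the constant $\log 2$. The paper instead lists the prime ideals of $\cO_{\K_{\bf}}$ in increasing norm order and invokes Landau's prime ideal theorem (its equation \eqref{eq2}) to obtain the stronger bound
\[
\log Q_{\bf}(\alpha)\;\geq\;(1+o(1))\,\w_{\bf}(\alpha)\,\log\w_{\bf}(\alpha),
\]
whose extra $\log\w_{\bf}(\alpha)$ factor swallows any constant without the $\e'$ trick. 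Your route is more elementary and perfectly adequate for the stated corollary; the paper's route would yield a slightly sharper inequality (with an additional $\frac{\log n}{\log\log n}$ factor) if one cared to record it.
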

If we assume Frey's refined ABC-conjecture (see \S3.5), 
we can improve \corref{cor1}. In fact, we prove the following theorem.

\begin{thm}\label{proprad-int}
Let $K$ be a number field and $A, B \in \cO_K \setminus \{0\}$ be such that $A/B$ is not a 
root of unity. Also let $(A, B)=1$ and $h(A) \geq h(B)$. 
Assuming Frey's refined ABC-conjecture as mentioned in \S3.5, for any $\e > 0$, we have 
$$
Q_K(A^n -B^n) \geq \frac{c(\e, \Delta_K)}{Q_K(AB)} ~ e^{(1-\e) [K : \Q] h(A) n} 
$$
for all natural numbers $n$, where $c(\e, \Delta_K)$ is a positive constant
depending on $\e$ and discriminant $\Delta_K$ of $K$. Here $h(\cdot)$ denotes the absolute logarithmic height of an algebraic number (see \S3).

\end{thm}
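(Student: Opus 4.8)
The plan is to place $A^n - B^n$ inside an $abc$-triple and invoke Frey's refined ABC-conjecture of \S3.5. I would start from the trivial identity
$$A^n + (-B^n) + (B^n - A^n) = 0,$$
setting $a = A^n$, $b = -B^n$, $c = B^n - A^n$. Since $A/B$ is not a root of unity we have $A^n \neq B^n$, so $c \neq 0$ for every $n$ and all three entries are non-zero; and since $(A,B)=1$ in $\cO_K$ one has $(A^m, B^{m'}) = 1$ for all $m, m' \ge 1$, whence $a, b, c$ are pairwise coprime and form an admissible triple. Next I would record the two quantities that enter the conjecture. For the radical, $\operatorname{rad}(A^n) = \operatorname{rad}(A)$ together with pairwise coprimality gives $\operatorname{rad}(abc) = \operatorname{rad}(A)\,\operatorname{rad}(B)\,\operatorname{rad}(A^n - B^n)$, whence, on taking absolute norms and using $(A,B)=1$ once more,
$$N_K(\operatorname{rad}(abc)) = Q_K(A)\,Q_K(B)\,Q_K(A^n - B^n) = Q_K(AB)\,Q_K(A^n - B^n).$$
For the height, multiplicativity of the absolute logarithmic height on powers gives $h(a) = h(A^n) = n\,h(A)$, while $h(b) = n\,h(B) \le n\,h(A)$ and $h(c) = h(A^n - B^n) \le n\,h(A) + \log 2$ by the hypothesis $h(A) \ge h(B)$; in particular $\max(h(a), h(b), h(c)) \ge n\,h(A)$. (Note $h(A) > 0$: otherwise $A$, and then $B$, would be roots of unity, contradicting that $A/B$ is not one.)

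I would then feed $(a,b,c)$ into Frey's conjecture. In the form recorded in \S3.5 it yields, for every $\e' > 0$, an inequality of the shape
$$\max\big(h(a), h(b), h(c)\big) \;\le\; (1+\e')\,\frac{\log N_K(\operatorname{rad}(abc))}{[K:\Q]} \;+\; C(\e', \Delta_K),$$
with $C(\e',\Delta_K)$ depending only on $\e'$ and $\Delta_K$. Substituting the two computations above gives
$$n\,h(A) \;\le\; (1+\e')\,\frac{\log\!\big(Q_K(AB)\,Q_K(A^n - B^n)\big)}{[K:\Q]} \;+\; C(\e',\Delta_K),$$
and solving for $\log Q_K(A^n - B^n)$,
$$\log Q_K(A^n - B^n) \;\ge\; \frac{[K:\Q]\,n\,h(A) - [K:\Q]\,C(\e',\Delta_K)}{1+\e'} \;-\; \log Q_K(AB).$$
Taking $\e' = \e/2$, so that $1/(1+\e') \ge 1 - \e$, and exponentiating, one obtains precisely
$$Q_K(A^n - B^n) \;\ge\; \frac{c(\e, \Delta_K)}{Q_K(AB)}\; e^{(1-\e)[K:\Q]\,h(A)\,n}, \qquad c(\e,\Delta_K) := \exp\!\Big(-\tfrac{[K:\Q]\,C(\e/2,\Delta_K)}{1+\e/2}\Big),$$
valid for all $n \in \N$; for small $n$ the right-hand side drops below $1 \le Q_K(A^n - B^n)$ and the displayed chain of inequalities still delivers it.

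I do not expect a serious obstacle once Frey's conjecture is granted — the argument is essentially a single substitution. The one point that genuinely matters is that the conjecture be used in its \emph{height} form: because $h(A^n) = n\,h(A)$ exactly, the left-hand side above is at least $n\,h(A)$, whereas a norm form of ABC would only give $\max\!\big(|N_K(a)|, |N_K(b)|, |N_K(c)|\big) \ge |N_K(A)|^n$, and $\log|N_K(A)|$ can be strictly smaller than $[K:\Q]\,h(A)$ (when $A$ has an archimedean absolute value $< 1$), which would cost the factor $[K:\Q]$ in the exponent of the target bound. The remaining care is routine bookkeeping: matching the normalisation of the radical term and the $\Delta_K$-dependence of the constant in the precise \S3.5 statement, and checking (as arranged above) that $(a,b,c)$ is a legitimate input — the hypothesis that $A/B$ is not a root of unity is exactly what keeps $c \neq 0$ for all $n$, and $(A,B)=1$ is what makes the triple admissible and lets $\operatorname{rad}(abc)$ factor as $Q_K(AB)\,Q_K(A^n - B^n)$.
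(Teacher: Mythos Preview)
Your argument is correct and is essentially the same as the paper's: both apply Frey's refined ABC-conjecture directly to the pair $(A^n, B^n)$ (equivalently, your triple $(A^n, -B^n, B^n-A^n)$), use $h_K(A^n)=n\,h_K(A)$ on the left, bound the radical term by $Q_K(AB)\,Q_K(A^n-B^n)$ on the right, and solve for $Q_K(A^n-B^n)$. Your write-up is in fact more careful than the paper's in justifying that the triple is admissible (using $(A,B)=1$ and that $A/B$ is not a root of unity) and in tracking the $\e$-to-$\e'$ conversion, but the method is identical.
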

\begin{rmk}\label{rmkrad-int}
Let $A, B$ be non-zero algebraic integers such that $(A, B)=1$, $h(A) \geq h(B)$ 
and $A/B$ is not a root of unity. 
Suppose that $K_0$ is a number field which contains $AB, A+B$ and 
hence contains the sequence $\frac{A^n - B^n}{A-B}$ for
all $n \in \N$. Also suppose that Frey's refined ABC-conjecture holds.
Then it follows from \thmref{proprad-int} that for any $\e > 0$ 
and all natural numbers $n$ , we have
\begin{equation*}
Q_{K_0}\(\frac{A^n-B^n}{A-B} \) \geq \frac{c(\e, \Delta_K)}
{Q_{K_0}( AB(A-B)^2 )} ~ e^{(1-\e) [K_0 : \Q] h(A) n},
\end{equation*}
where $K$ is the field $K_0(A)$.
\end{rmk}
When $A, B \in \cO_{K}$ are not necessarily co-prime, we have the following theorem.
\begin{thm}\label{proprad-alg}
Let $K$ be a number field and $A, B \in \cO_K \setminus \{0\}$ be such that $A/B$ is not a root of unity.  
Also let $K_1$ be an extension of $K$ of smallest degree such that $(A,B)$ is 
a principal ideal in $K_1$. Assuming Frey's refined ABC-conjecture, for any $\e > 0$, we have 
$$
Q_K(A^n -B^n) \geq \frac{c(\e, \Delta_{K_1})}{Q_K(AB)}~ e^{(1-\e) [K : \Q] h\(\frac{A}{B}\) n}
$$
for all sufficiently large natural numbers $n$ 
depending only on $\e$ and degree of $\Q(A/B)$ over $\Q$.
\end{thm}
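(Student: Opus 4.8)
The plan is to deduce \thmref{proprad-alg} from the coprime case — which is essentially \thmref{proprad-int} — by base-changing to the auxiliary field $K_1$. Write $\fd=(A,B)=A\cO_K+B\cO_K$ for the ideal gcd. Since extension of scalars takes ideal sums to ideal sums, $\fd\cO_{K_1}$ is the gcd of $A\cO_{K_1}$ and $B\cO_{K_1}$, and by the defining property of $K_1$ it is principal, say $\fd\cO_{K_1}=\delta\cO_{K_1}$ with $\delta\in\cO_{K_1}\setminus\{0\}$. First I would record that $A_1:=A/\delta$ and $B_1:=B/\delta$ lie in $\cO_{K_1}$ (the ideals they generate, namely $A\cO_{K_1}\fd^{-1}\cO_{K_1}$ and $B\cO_{K_1}\fd^{-1}\cO_{K_1}$, are integral), that they are coprime in $\cO_{K_1}$ (we divided out the gcd), and that $A_1/B_1=A/B$ is not a root of unity.

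The core step is to run the argument behind \thmref{proprad-int} over $K_1$ for the coprime pair $(A_1,B_1)$: apply Frey's refined ABC-conjecture (\S3.5) to the coprime triple $A_1^n=B_1^n+(A_1^n-B_1^n)$ in $\cO_{K_1}$. On the conductor side, pairwise coprimality of $A_1$, $B_1$ and $A_1^n-B_1^n$ gives $N_{K_1}\!\big(rad(A_1^nB_1^n(A_1^n-B_1^n)\cO_{K_1})\big)=Q_{K_1}(A_1B_1)\,Q_{K_1}(A_1^n-B_1^n)$; on the height side, the conjecture controls the projective height of the triple, which is at least $h([A_1^n:B_1^n])=h\!\big((A/B)^n\big)=n\,h(A/B)$. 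Rearranging, for every $\e>0$,
$$
Q_{K_1}(A_1^n-B_1^n)\ \ge\ \frac{c(\e,\Delta_{K_1})}{Q_{K_1}(A_1B_1)}\ e^{(1-\e)\,[K_1:\Q]\,h(A/B)\,n}
$$
for all $n$ beyond a threshold governed, via a standard (Lehmer-type) lower bound $h(A/B)\gg_{[\Q(A/B):\Q]}1$ used to absorb the error terms, only by $\e$ and $[\Q(A/B):\Q]$. I would stress that keeping the \emph{full} projective height $h([A_1^n:B_1^n:A_1^n-B_1^n])$ is what yields the exponent $h(A/B)$ rather than merely $h(A_1)$, so invoking \thmref{proprad-int} as a black box (whose statement carries $h(A)$) would not suffice.

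Next I would descend this estimate from $K_1$ back to $K$. Since $A\in\fd$ and $B\in\fd$ we have $\fd^n\mid(A^n-B^n)\cO_K$, and from $A^n-B^n=\delta^n(A_1^n-B_1^n)$ one gets, by unique factorization of ideals in $\cO_{K_1}$, a factorization $(A^n-B^n)\cO_K=\fd^n\,\cI$ with $\cI$ integral and $\cI\cO_{K_1}=(A_1^n-B_1^n)\cO_{K_1}$. As $\cI$ divides $(A^n-B^n)\cO_K$, we get $rad(\cI)\mid rad\big((A^n-B^n)\cO_K\big)$, hence $Q_K(A^n-B^n)\ge N_K(rad(\cI))$; moreover $N_K(rad(\cI))^{[K_1:K]}=N_{K_1}(rad(\cI)\cO_{K_1})\ge N_{K_1}\!\big(rad(\cI\cO_{K_1})\big)=Q_{K_1}(A_1^n-B_1^n)$, the middle step being the elementary fact that extending an ideal can only raise the norm of its radical (every ramification index is $\ge1$). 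Feeding in the previous display, and using $[K_1:\Q]=[K_1:K]\,[K:\Q]$ together with $Q_{K_1}(A_1B_1)^{1/[K_1:K]}\le Q_K(AB)$ (because $A_1B_1\cO_{K_1}$ divides $AB\cO_{K_1}$ and $Q_{K_1}(\alpha)\le Q_K(\alpha)^{[K_1:K]}$ for $\alpha\in\cO_K$), then renaming the constant, gives
$$
Q_K(A^n-B^n)\ \ge\ \frac{c(\e,\Delta_{K_1})}{Q_K(AB)}\ e^{(1-\e)\,[K:\Q]\,h(A/B)\,n}
$$
for all sufficiently large $n$. (No hypothesis $h(A)\ge h(B)$ is needed: the height input above is symmetric in $A$ and $B$, as are $Q_K(A^n-B^n)$ and $h(A/B)$.)

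I expect the main obstacle to be the bookkeeping in this last step: one must make the passage $K_1/K$ fully effective and keep every inequality for radicals and absolute norms pointed in the right direction (ramification helps, but this has to be verified), and — most importantly — one must check that the only quantity depending on $A,B,\delta$ that survives into the final estimate is the advertised factor $Q_K(AB)$, i.e. that the spurious $Q_{K_1}(A_1B_1)$ is absorbed correctly, and that the constant and the threshold on $n$ depend only on the data named in the statement. The remaining ingredients — integrality and coprimality of $A_1,B_1$ over $\cO_{K_1}$, the identity $\cI\cO_{K_1}=(A_1^n-B_1^n)\cO_{K_1}$, and the pairwise-coprimality computation feeding the ABC step — are routine.
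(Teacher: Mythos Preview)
Your overall strategy matches the paper's exactly: pass to $K_1$ where $(A,B)=(\delta)$, set $A_1=A/\delta$, $B_1=B/\delta$, apply Frey's ABC over $K_1$ to the coprime pair $(A_1,B_1)$, then descend back to $K$. The one substantive difference is how the exponent $h(A/B)$ is extracted. The paper does \emph{not} read off a projective height; instead it uses the specific term $h_{K_1}(A_1^n-B_1^n)\ge \log|N_{K_1}(A_1^n-B_1^n)|$ and then invokes \lemref{eqnorm}, which gives $\log|N_{K_1}(A_1^n-B_1^n)| = [K_1:\Q]\,h(A/B)\,n\,(1+O(\log(n+1)/n))$ with an implied constant depending only on $[\Q(A/B):\Q]$ (via the Dobrowolski/Voutier lower bound on $h(A/B)$). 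This is precisely where the ``sufficiently large $n$ depending only on $\e$ and $[\Q(A/B):\Q]$'' enters. Your claim that Frey's conjecture \emph{as stated in \S3.5} controls the projective height $h([A_1^n:B_1^n:A_1^n-B_1^n])$ is not what Conjecture~\ref{conj} says: it bounds $\max\{h_{K_1}(A_1^n),h_{K_1}(B_1^n),h_{K_1}(A_1^n-B_1^n)\}$, and a max of sums is in general \emph{below} the sum of maxes, so the projective height does not drop out directly. The fix is exactly the paper's: use the $h_{K_1}(A_1^n-B_1^n)$ term together with \lemref{eqnorm}.

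Your descent is correct but more elaborate than necessary. The paper simply uses $A^n-B^n=\delta^n(A_1^n-B_1^n)$ to get $Q_{K_1}(A_1^n-B_1^n)\le Q_{K_1}(A^n-B^n)\le Q_K(A^n-B^n)^{[K_1:K]}$ (the second inequality being \eqref{eqradG}), and likewise $Q_{K_1}(A_1B_1)\le Q_K(AB)^{[K_1:K]}$; taking $[K_1:K]$-th roots finishes at once. Your route through the auxiliary ideal $\cI$ with $\cI\cO_{K_1}=(A_1^n-B_1^n)\cO_{K_1}$ recovers the same inequalities but with extra bookkeeping.
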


\begin{rmk}\label{rmkrad-alg}
Let $A, B$ be non-zero algebraic integers such that 
$A/B$ is not a root of unity. 
Suppose that $K_0$ is a number field which contains algebraic integers $AB, A+B$ and hence
contains the sequence $\frac{A^n - B^n}{A-B}$ for
all $n \in \N$. Let $K_1$ be an extension of $K_0(A)$ of smallest degree 
such that $(A,B)$ is principal ideal in $K_1$. Also suppose that Frey's refined ABC-conjecture holds.
Then it follows from \thmref{proprad-alg} that for any $\e > 0$, we have
$$		
Q_{K_0}\(\frac{A^n-B^n}{A-B} \) \geq 
\frac{c(\e, \Delta_{K_1})}{Q_{K_0}( AB(A-B)^2 )} ~ e^{(1-\e) [K_0 : \Q] h\(\frac{A}{B}\) n}
$$
for all sufficiently large natural numbers $n$ depending only on $\e$ and 
the degree of $\Q(A/B)$ over $\Q$.
\end{rmk}

Before stating the next theorem, let us fix some notations. For any
primitive cusp form $f$ of weight $k$ and level $N$ with trivial character, 
for any rational prime $p \nmid N$, let $\ap, \bp$ denote the roots of the 
polynomial $x^2 - a_f(p)x + p^{k-1}$.  Similarly for 
primitive Hilbert cusp form~$\bf$ of weight $2\bk$ and level $\fn$ with trivial
character (see \S3), let $\afp, \bfp$ denote the roots of the polynomial
$x^2 - c(\fp)x +  N_F(\fp)^{2k-1}$, where $\fp$ is a prime ideal in $F$ 
with $\fp \nmid \fn \fD_F$.  Here $\fD_F$ is the different ideal of $F$. Set
$$
\gamma_{f,p} = \frac{\ap}{\bp}  
\phantom{mm}\text{and}\phantom{mm} 
\gamma_{\bf, \fp} = \frac{\afp}{\bfp}, 
$$
where $p$ and $\fp$ be as before.  We shall denote the discriminant of $\K_f(\ap)$ 
(respectively $\K_{\bf}(\afp)$) by $\Delta_{f, p}$ (respectively by $\Delta_{{\bf}, {\fp}}$).

When Fourier coefficients are rational integers, applying
\rmkref{rmkrad-int}, we now have the following theorem.
\begin{thm}\label{corrad-int}
Suppose that Frey's refined ABC-conjecture is true. Let $\e > 0$ be arbitrary. Then we have the following.
\begin{itemize}
\item 
For almost all primes $p$ and for all natural numbers $n$, we have 
$$
Q(\tau(p^n)) ~\ge~~ \frac{c(\e, \Delta_{\tau, p})}{Q({\tau(p)}^2 - 4 p^{11})} 
~~p^{\(\frac{11}{2} - \nu_{\tau, p} -\e\) (n+1)~+~\delta_{\tau, p}}~,
$$
where $\nu_{\tau, p} = \nu_p(\tau(p))$, $p$-adic valuation of $\tau(p)$ and 
$$
\delta_{\tau, p} ~=~
\begin{cases}
1 &\text{if } \nu_{\tau, p} \ne 0,\\
-1 & \text{otherwise}.
\end{cases}
$$

\item 
More generally, if $f$ is as in \thmref{EHE} with integer Fourier coefficients,
then for almost all primes $p$ and for all natural numbers $n$, we have
$$
Q(a_f(p^n)) ~\ge~~  \frac{c(\e, \Delta_{f, p})}{Q(a_f(p)^2 - 4p^{k-1})}
~~ p^{(1-\e)\(\frac{k-1}{2} - \nu_{f, p}\) (n+1)~+~ \delta_{f, p}}~, 
$$
where $\nu_{f, p} = \nu_p(a_f(p))$ and
$$
\delta_{f, p} ~=~
\begin{cases}
1 &\text{if } \nu_{f, p} \ne 0,\\
-1 & \text{otherwise}.
\end{cases}
$$

\item 
Finally, if $\bf$ is as in \thmref{thm1} with integer Fourier coefficients,
then for almost all prime ideals~$\fp$ in $\cO_{F}$ and for all natural numbers $n$,
we have
$$
Q(c({\fp}^n)) \geq \frac{c(\e, \Delta_{\bf, \fp})}{Q(c(\fp)^2 - 4 N_F(\fp)^{2k-1})}
~~ {N_F(\fp)}^{(1-\e)\(\frac{2k-1}{2} - \frac{\nu_{\bf, \fp}}{\ell } \)(n+1)
~+~ \frac{\delta_{\bf, \fp}}{\ell}}~, 
$$
where $N_F(\fp)= p^{\ell}$ and  $\nu_{\bf, \fp} = \nu_p(c(\fp))$. Here 
$$
\delta_{\bf, \fp} ~=~
\begin{cases}
1 &\text{if } \nu_{\bf, \fp} \ne 0,\\
-1 & \text{otherwise}.
\end{cases}
$$
\end{itemize}
\end{thm}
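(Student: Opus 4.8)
The plan is to deduce all three assertions from a single application of \rmkref{rmkrad-int} to the Lucas sequence built from the Satake parameters, treating the possibility that these parameters fail to be coprime by factoring out a power of the underlying rational prime. I will carry this out for the Ramanujan $\tau$-function (so $f=\Delta$, weight $k=12$); the general cusp form and the Hilbert case are identical up to notation. Fix $p$ in the density-one set $P_{\Delta}$ of \thmref{tau}, so that $\tau(p)\neq 0$ and $\gamma_{\Delta,p}=\ap/\bp$ is not a root of unity. Then $\ap,\bp$ are the roots of $x^{2}-\tau(p)x+p^{11}$, so $\ap\bp=p^{11}$, $\ap+\bp=\tau(p)$, $(\ap-\bp)^{2}=\tau(p)^{2}-4p^{11}$, and $\tau(p^{n})=(\ap^{n+1}-\bp^{n+1})/(\ap-\bp)$, which is non-zero for every $n$ since $\ap/\bp$ is not a root of unity. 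By Deligne's bound $|\tau(p)|\leq 2p^{11/2}$ the discriminant $\tau(p)^{2}-4p^{11}$ is negative, so $L:=\Q(\ap)$ is an imaginary quadratic field of discriminant $\Delta_{\tau,p}$; since $|\ap|=|\bp|=p^{11/2}$, the Mahler measure of the minimal polynomial $x^{2}-\tau(p)x+p^{11}$ of $\ap$ is $p^{11}$, whence $h(\ap)=h(\bp)=\tfrac{11}{2}\log p$. Write $\nu=\nu_{\tau,p}=\nu_{p}(\tau(p))$; Deligne's bound gives $\nu\leq 5$, in particular $2\nu<11$.

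Suppose first that $\nu=0$. Then the ideals $(\ap)$ and $(\bp)$ of $\cO_{L}$ are coprime: their product is $(p^{11})$, so a common prime divisor would lie over $p$, yet it would also divide $(\ap)+(\bp)\supseteq(\tau(p))$, and $p\nmid\tau(p)$. Since moreover $h(\ap)\geq h(\bp)$ and $\ap/\bp$ is not a root of unity, \rmkref{rmkrad-int}, applied with $K_{0}=\Q$, $A=\ap$, $B=\bp$, and $n$ replaced by $n+1$, gives
$$
Q(\tau(p^{n}))\ \geq\ \frac{c(\e,\Delta_{\tau,p})}{Q\big(\ap\bp(\ap-\bp)^{2}\big)}\;e^{(1-\e)h(\ap)(n+1)}.
$$
Here $\ap\bp(\ap-\bp)^{2}=p^{11}(\tau(p)^{2}-4p^{11})$ and $p\nmid\tau(p)^{2}-4p^{11}$ (the latter is $\equiv\tau(p)^{2}\pmod p$), so the denominator equals $p\cdot Q(\tau(p)^{2}-4p^{11})$. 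Substituting $h(\ap)=\tfrac{11}{2}\log p$ and relabelling $\e$ yields the claimed bound, in which $\nu_{\tau,p}=0$ and $\delta_{\tau,p}=-1$.

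Now suppose that $\nu\geq 1$. From $\ap^{2}=\tau(p)\ap-p^{11}$ and $2\nu<11$, a short valuation computation at the primes of $\cO_{L}$ over $p$ shows that $p^{\nu}\mid\ap$ and $p^{\nu}\mid\bp$ in $\cO_{L}$; write $\ap=p^{\nu}A$, $\bp=p^{\nu}B$ with $A,B\in\cO_{L}$. Then $AB=p^{11-2\nu}$, $A+B=\tau':=\tau(p)/p^{\nu}$ with $p\nmid\tau'$, the ideals $(A)$ and $(B)$ are coprime (their greatest common divisor divides the $p$-free integer $\tau'$, while its square divides $(p^{11-2\nu})$), $\Q(A)=L$, and $h(A)=h(B)=\tfrac12\log(p^{11-2\nu})=(\tfrac{11}{2}-\nu)\log p$. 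Moreover $\tau(p^{n})=p^{\nu n}\,u_{n+1}$, where $u_{m}=(A^{m}-B^{m})/(A-B)$ satisfies $u_{m+1}=\tau'u_{m}-p^{11-2\nu}u_{m-1}$; reducing mod $p$ gives $u_{m}\equiv(\tau')^{m-1}\not\equiv 0\pmod p$, so $p\nmid u_{n+1}$ and hence $Q(\tau(p^{n}))=p\cdot Q(u_{n+1})$. Applying \rmkref{rmkrad-int} to the pair $(A,B)$ exactly as before, and using $AB(A-B)^{2}=p^{11-2\nu}\big((\tau')^{2}-4p^{11-2\nu}\big)$ together with $\tau(p)^{2}-4p^{11}=p^{2\nu}\big((\tau')^{2}-4p^{11-2\nu}\big)$ and $p\nmid(\tau')^{2}-4p^{11-2\nu}$ --- so that $Q(AB(A-B)^{2})$ and $Q(\tau(p)^{2}-4p^{11})$ both equal $p\cdot Q\big((\tau')^{2}-4p^{11-2\nu}\big)$ --- one obtains
$$
Q(\tau(p^{n}))=p\cdot Q(u_{n+1})\ \geq\ \frac{c(\e,\Delta_{\tau,p})}{Q(\tau(p)^{2}-4p^{11})}\;p^{(1-\e)(\frac{11}{2}-\nu)(n+1)+1},
$$
which, after relabelling $\e$, is the claim with $\delta_{\tau,p}=1$.

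For a primitive form $f$ of weight $k$ with integer coefficients and $p\in P_{f}$ the same computation goes through with $11$ replaced by $k-1$: the Ramanujan--Petersson bound and the fact that $\gamma_{f,p}$ is not a root of unity give $a_{f}(p)^{2}<4p^{k-1}$ and $2\nu_{p}(a_{f}(p))<k-1$, which is all that was used. For a Hilbert cusp form $\bf$ as in \thmref{thm1} with integer coefficients and $\fp\in P_{\bf}$, one works with $\afp,\bfp$, the roots of $x^{2}-c(\fp)x+N_F(\fp)^{2k-1}$, and with $c(\fp^{n})=(\afp^{n+1}-\bfp^{n+1})/(\afp-\bfp)$; since the residue degree $\ell$ of $\fp$ over $p$ divides $[F:\Q]$, which is odd, the integer $\ell(2k-1)$ is odd, so $c(\fp)^{2}-4N_F(\fp)^{2k-1}<0$ and $2\nu_{p}(c(\fp))<\ell(2k-1)$ hold automatically, and the argument runs verbatim with $p$ replaced by $N_F(\fp)=p^{\ell}$ and $11$ by $\ell(2k-1)$; rewriting $p^{\ell(\,\cdots\,)}$ as $N_F(\fp)^{(\,\cdots\,)}$ produces the stated exponent together with the term $\delta_{\bf,\fp}/\ell$. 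I expect the only delicate points to be the exact determination of the $\delta$-term --- which hinges on comparing the radical of the companion number $\ap\bp(\ap-\bp)^{2}$ with that of $a_{f}(p)^{2}-4p^{k-1}$, and on the Lucas-recurrence-mod-$p$ observation that pins the $p$-part of $a_{f}(p^{n})$ to exactly $p^{\nu n}$ --- and the integrality statement $p^{\nu}\mid\ap$, which rests squarely on Deligne's bound $2\nu\leq k-1$; everything else is a direct invocation of \rmkref{rmkrad-int}.
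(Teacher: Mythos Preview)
Your proposal is correct and follows essentially the same route as the paper: restrict to the density-one set where $\gamma$ is not a root of unity, divide out $p^{\nu}$ to obtain a coprime Lucas pair $(A_p,B_p)$ with $h(A_p)=(\tfrac{11}{2}-\nu)\log p$, apply \rmkref{rmkrad-int} with $K_0=\Q$, and then compare $Q(A_pB_p(A_p-B_p)^2)$ with $Q(\tau(p)^2-4p^{11})$ in the two cases $\nu=0$ and $\nu\neq0$ to extract the $\delta$-term. The only cosmetic differences are that the paper establishes $\nu_p(\tau(p^n))=n\nu$ by a short induction on the Hecke recurrence (equivalent to your reduction of the Lucas recurrence mod $p$), and it obtains the integrality of $A_p,B_p$ simply by observing they are roots of the monic integral polynomial $x^{2}-\tau(p)p^{-\nu}x+p^{11-2\nu}$ rather than via a valuation computation.
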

The lower bound in \thmref{corrad-int} can be compared to the
upper bound in the Remark below.
\begin{rmk}
By Deligne's bound, we have
$$
|\tau(p^n)| \leq (n+1)~~ p^{\frac{11}{2} n} ~,
$$
for all natural numbers $n$. 
Thus trivially we have $Q(\tau(p^n)) \leq (n+1)~~ p^{\frac{11}{2} n}$. 
Suppose $\tau(p)\neq 0$ and $\nu_{\tau, p} \neq 0$, then we have
$$
Q(\tau(p^n)) \leq \frac{|\tau(p^n)|}{p^{n\nu_{\tau, p}-1}} \leq (n+1)
~~ p^{\(\frac{11}{2}- \nu_{\tau, p} \)n ~+~1}
$$
for all natural numbers $n$.  

When $f$ is as in \thmref{EHE}
with integer Fourier coefficients, we have 
$Q(a_f(p^n)) \leq (n+1)~~p^{\frac{k-1}{2} n}$. Suppose $a_f(p) \neq 0, \nu_{f, p} \neq 0$, then we have
$$
Q(a_f(p^n)) \leq (n+1)~~p^{\(\frac{k-1}{2}- \nu_{f, p} \)n ~+~1}
$$
for all natural numbers $n$.

When $\bf$ is as in \thmref{thm1} with integer Fourier coefficients,
using Blasius's bound (see \S3), we get $Q(c({\fp}^n)) \leq (n+1)~~ N_F(\fp)^{\frac{2k-1}{2} n}$. Suppose $c(\fp) \neq 0, \nu_{\bf, \fp} \neq 0$, then we have
$$
Q(c({\fp}^n)) \leq (n+1)~~ N_F(\fp)^{\(\frac{2k-1}{2}- \frac{\nu_{\bf, \fp}}{\ell} \)n ~+~\frac{1}{\ell}}
$$
for all natural numbers $n$. Here $N_F(\fp)= p^{\ell}$.
\end{rmk}
When Fourier coefficients are not necessarily rational integers,
applying \rmkref{rmkrad-alg}, we deduce the following result.

\begin{thm}\label{corradalg}
Suppose that Frey's refined ABC-conjecture is true. Let $\e > 0$ be arbitrary. Then we have the following.
\begin{itemize}
\item If $f$ is as in \thmref{EHE}, then for 
almost all primes~$p$, we have
$$
Q_f(a_f(p^n)) \geq c(\e, f, p)~~ e^{(1-\e)[\K_f :\Q] h\( \gamma_{f, p}\)n}
$$
for all sufficiently large natural numbers $n$ depending on $\e$ and degree of $\Q(\gamma_{f,p})$
over $\Q$. 
\item For a Hilbert cusp form $\bf$ as in \thmref{thm1}, for almost all
primes ideals $\fp$, we have 
$$
Q_{\bf}(c({\fp}^n)) \geq c(\e, \bf, \fp) ~~e^{(1-\e) [\K_{\bf} : \Q] h\( \gamma_{\bf, \fp} \)n}
$$
for all sufficiently large natural numbers $n$ depending on $\e$ 
and degree of $\Q(\gamma_{\bf, \fp})$
over $\Q$.
\end{itemize}
\end{thm}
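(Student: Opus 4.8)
The plan is to reduce both assertions to the conditional estimate of \rmkref{rmkrad-alg}, applied to the pair of roots of the Euler factor at the relevant prime. Consider first the elliptic case. Write $\ap,\bp$ for the roots of $x^2-a_f(p)x+p^{k-1}$. These are non-zero algebraic integers with $\ap\bp=p^{k-1}$ and $\ap+\bp=a_f(p)$, and the Hecke recursion for $f$ at $p$ (with $a_f(1)=1$) yields the Lucas-sequence identity $a_f(p^n)=(\ap^{n+1}-\bp^{n+1})/(\ap-\bp)$ for all $n\ge 0$; in particular $\K_f$ contains $\ap\bp$ and $\ap+\bp$. Now restrict $p$ to the density-one set $P_f$ of \thmref{EHE}, for which $\gamma_{f,p}=\ap/\bp$ is not a root of unity; this costs only a set of density zero, since $f$ is non-CM (for instance, at an ordinary prime $p\nmid a_f(p)$ the element $\gamma_{f,p}$ already has a negative valuation at some prime above $p$, so it is not an algebraic integer, let alone a root of unity). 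For such $p$ I apply \rmkref{rmkrad-alg} with $K_0=\K_f$, $A=\ap$, $B=\bp$: these are non-zero algebraic integers, $A/B=\gamma_{f,p}$ is not a root of unity, and $K_0$ contains $AB$ and $A+B$. Replacing $n$ by $n+1$ there gives, for every $\e>0$ and all sufficiently large $n$,
$$
Q_f(a_f(p^n))=Q_{\K_f}\!\left(\frac{\ap^{n+1}-\bp^{n+1}}{\ap-\bp}\right)
\ \ge\ \frac{c(\e,\Delta_{K_1})}{Q_{\K_f}\!\bigl(\ap\bp(\ap-\bp)^2\bigr)}\;
e^{(1-\e)[\K_f:\Q]\,h(\gamma_{f,p})\,(n+1)}.
$$
Here $\ap\bp(\ap-\bp)^2=p^{k-1}\bigl(a_f(p)^2-4p^{k-1}\bigr)$ is a fixed non-zero element of $\cO_{\K_f}$, so $Q_{\K_f}$ of it is a fixed positive constant; the auxiliary field $K_1$ (a least-degree extension of $\K_f(\ap)$ in which $(\ap,\bp)$ is principal) is fixed for fixed $f,p$; and $e^{(1-\e)[\K_f:\Q]h(\gamma_{f,p})(n+1)}=e^{(1-\e)[\K_f:\Q]h(\gamma_{f,p})}\cdot e^{(1-\e)[\K_f:\Q]h(\gamma_{f,p})n}$. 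Absorbing every $n$-independent quantity into a single positive constant $c(\e,f,p)$ yields the first inequality, valid for all $n$ sufficiently large in terms of $\e$ and $[\Q(\gamma_{f,p}):\Q]$, exactly as inherited from \rmkref{rmkrad-alg}.

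The Hilbert case is entirely parallel. Here $\afp,\bfp$ are the roots of $x^2-c(\fp)x+N_F(\fp)^{2k-1}$, the Hecke recursion for $\bf$ at $\fp$ gives $c(\fp^n)=(\afp^{n+1}-\bfp^{n+1})/(\afp-\bfp)$, and $\K_{\bf}$ contains $\afp\bfp=N_F(\fp)^{2k-1}$ and $\afp+\bfp=c(\fp)$. For $\fp$ in the density-one set $P_{\bf}$ of \thmref{thm1} the quotient $\gamma_{\bf,\fp}=\afp/\bfp$ is not a root of unity (again a density-zero exclusion, since $\bf$ is non-CM), so \rmkref{rmkrad-alg} applies with $K_0=\K_{\bf}$, $A=\afp$, $B=\bfp$. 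Absorbing $Q_{\K_{\bf}}\!\bigl(\afp\bfp(\afp-\bfp)^2\bigr)$, the constant $c(\e,\Delta_{K_1})$ and the exponential factor evaluated at $n+1$ into $c(\e,\bf,\fp)$ gives the second inequality.

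The only genuinely delicate ingredient is certifying that $\gamma_{f,p}$ (resp.\ $\gamma_{\bf,\fp}$) fails to be a root of unity for a set of primes (resp.\ prime ideals) of density one; everything else is formal bookkeeping around \rmkref{rmkrad-alg}, which already carries the full weight of Frey's refined ABC-conjecture. Since that root-of-unity exclusion is needed and established while proving \thmref{EHE} and \thmref{thm1}, the present argument requires no new analytic input beyond invoking the conditional \rmkref{rmkrad-alg} and unwinding the Lucas-sequence identity.
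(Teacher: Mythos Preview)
Your argument is correct and follows essentially the same route as the paper: restrict to the density-one set where $\gamma_{f,p}$ (resp.\ $\gamma_{\bf,\fp}$) is not a root of unity, write $a_f(p^n)$ (resp.\ $c(\fp^n)$) via the Lucas identity, and apply \rmkref{rmkrad-alg} with $K_0=\K_f$ (resp.\ $K_0=\K_{\bf}$), absorbing the $n$-independent factors into the constant. Your explicit bookkeeping of the $n\mapsto n+1$ shift and the parenthetical remark on ordinary primes are harmless elaborations, not departures from the paper's proof.
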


\begin{rmk} Let us compare the lower bounds in \corref{cor1} and 
\thmref{corrad-int}, say for the function $\tau$. Let $\e >0$ be a real number and
 $P_{\tau}$ be the set of prime numbers $p$ such that $\gamma_{\tau, p}$ 
 is not a root of unity. Fix a prime $p \in P_{\tau}$. From the proof of \thmref{corrad-int}, it follows that
\begin{equation}\label{logQc}
\log Q(\tau(p^n)) ~\geq~ c(p) n
\end{equation}
for all sufficiently large natural numbers $n$. Here $c(p)$ is a positive constant depending on $p$.
On the other hand, from the proof of \rmkref{rmk1.1}, \corref{cor1} 
holds for prime $p \in P_{\tau}$ and hence we have
\begin{equation}\label{logQ}
\log Q(\tau(p^n)) \geq n^{(1-\e)\frac{\log 2}{\log\log n}}
\end{equation}
for infinitely many natural numbers $n$. So the conditional lower 
bound in \eqref{logQc} is stronger than that of in~\eqref{logQ}.
\end{rmk}

\medskip

\section{Notations and Preliminaries}

\medskip

\subsection{Prerequisites from elliptic modular forms}
Let $f$ be a primitive cusp form of weight $k$, level $N$ with trivial character 
having Fourier coefficients $\{a_f(n) ~|~ n \in \N\}$. 
By Deligne's bound, we can write $a_f(p) = 2p^{\frac{k-1}{2}} cos(\theta_p)$, $\theta_p \in [0, \pi]$.
Now we will state the Sato-Tate conjecture, which is now a theorem due to the 
pioneering work of Clozel, Harris, Taylor, Shepherd-Barron, 
Barnet-Lamb and Geraghty \cite{BGHT,CHT,HST,RT}.
\begin{thm}\label{ST}
Let $f$ be a non-CM primitive cusp form of weight $k$, level $N$. For any $0 \leq c \leq d \leq \pi$, we have
\begin{equation*}
\lim_{x \to \infty} \frac{\#\{ p \leq x : \theta_p \in [c,d]\}}{\#\{p \leq x\}} = \frac{2}{\pi} \int_{c}^{d} \sin^2(\theta) d\theta.
\end{equation*}
\end{thm}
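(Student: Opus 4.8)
The plan is to run the classical reduction of equidistribution to analytic properties of symmetric power $L$-functions, in the style of Serre and Langlands. Writing $a_f(p) = 2p^{(k-1)/2}\cos\theta_p$ with $\theta_p \in [0,\pi]$, one recognizes the target measure $\tfrac{2}{\pi}\sin^2\theta\,d\theta$ as the pushforward under $g \mapsto \tfrac{1}{2}\operatorname{tr}(g)$ of the Haar probability measure on $\mathrm{SU}(2)$. The characters of the irreducible representations of $\mathrm{SU}(2)$ are the Chebyshev polynomials $U_m(\cos\theta) = \sin((m+1)\theta)/\sin\theta$, so by the Weyl equidistribution criterion the theorem is equivalent to the estimate
\begin{equation*}
\sum_{p \leq x} U_m(\cos\theta_p) = o\!\left(\frac{x}{\log x}\right) \qquad (x \to \infty)
\end{equation*}
for every integer $m \geq 1$.

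First I would translate these prime sums into a statement about $L$-functions. The quantity $U_m(\cos\theta_p) = \sum_{j=0}^{m} e^{i(m-2j)\theta_p}$ is the trace of Frobenius at $p$ on the $m$-th symmetric power of the two-dimensional representation attached to $f$, so the symmetric power $L$-function $L(s,\mathrm{Sym}^m f) := \prod_{p} \prod_{j=0}^{m}\bigl(1 - e^{i(m-2j)\theta_p} p^{-s}\bigr)^{-1}$ (up to bad Euler factors and the archimedean factor) has the property that $-\tfrac{L'}{L}(s,\mathrm{Sym}^m f)$ has prime-indexed Dirichlet coefficients exactly $U_m(\cos\theta_p)\log p$. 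A standard Tauberian argument (Wiener--Ikehara, or a zero-free-region contour shift) then delivers the displayed cancellation once we know that for each $m \geq 1$ the function $L(s,\mathrm{Sym}^m f)$ admits holomorphic continuation to the closed half-plane $\operatorname{Re}(s)\geq 1$ and is non-vanishing on the line $\operatorname{Re}(s)=1$.

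Next I would supply those analytic properties through \emph{potential automorphy}. The theorems of Clozel--Harris--Taylor, Harris--Shepherd-Barron--Taylor, and Barnet-Lamb--Geraghty--Harris--Taylor produce, for each $m$, a totally real Galois extension $F/\Q$ over which $\mathrm{Sym}^m f$ corresponds to a cuspidal automorphic representation of $\mathrm{GL}_{m+1}(\mathbb{A}_F)$. Feeding this into Brauer's induction theorem (to descend to $\Q$ as a product of Artin-twisted automorphic $L$-functions with integer exponents), Godement--Jacquet theory for standard $L$-functions, and Rankin--Selberg theory on $\mathrm{GL}_{m+1}\times\mathrm{GL}_{m+1}$, one obtains meromorphic continuation and a functional equation; the Jacquet--Shalika non-vanishing theorem for cuspidal $L$-functions on $\operatorname{Re}(s)=1$, applied over $F$, then yields the needed non-vanishing, while the absence of a pole (as $m\geq 1$) removes any main term. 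Weyl's criterion then closes the argument.

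The crux, and essentially the entire content, is the potential automorphy input of the previous paragraph. It rests on modularity lifting theorems for $n$-dimensional $\ell$-adic Galois representations, proved by the Taylor--Wiles--Kisin patching method adapted to definite unitary groups, with delicate control of local deformation problems at ramified and at $\ell$-adic primes and a treatment of the residually reducible and small-image situations, together with the construction of auxiliary geometric families (Dwork/Katz hypergeometric motives) furnishing a base point where automorphy is already known and along which one can propagate it. By comparison every other ingredient, namely Weyl's criterion, the Tauberian step, Brauer induction, and Rankin--Selberg analytic theory, is routine, so I expect the modularity lifting and the geometry of the hypergeometric families to carry all of the genuine difficulty.
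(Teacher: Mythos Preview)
The paper does not give its own proof of this statement: it is quoted in \S3.1 as a known theorem (the Sato-Tate conjecture), attributed to Clozel, Harris, Taylor, Shepherd-Barron, Barnet-Lamb and Geraghty, and then used as a black box. Your outline is an accurate high-level summary of exactly that body of work --- Weyl's criterion reducing equidistribution to cancellation in $\sum_{p\le x} U_m(\cos\theta_p)$, the link to analytic continuation and non-vanishing of $L(s,\mathrm{Sym}^m f)$ on $\mathrm{Re}(s)=1$, and the potential automorphy machinery supplying those analytic properties --- so there is nothing to correct, but also nothing to compare: you are sketching the cited references rather than diverging from any argument in the paper itself.
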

The Sato-Tate theorem is also known in the context of Hilbert cusp forms from the work of Barnet-Lamb, Gee and Geraghty \cite{BGG}.

We also need the following lemma (see \cite[Lemma 2.2]{KRW}, \cite[Lemma 2.5]{MModd})
\begin{lem}\label{afpn0}
Let $f$ be a primitive cusp form of weight $k$, level $N$. For all $p$ sufficiently large, either $a_f(p) = 0$ or $a_f(p^n) \neq 0,~ \forall n \geq 1$.
\end{lem}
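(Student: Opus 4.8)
The plan is to deduce the lemma from a finiteness statement: I will show that only finitely many primes $p$ are \emph{exceptional}, meaning that $a_f(p)\neq 0$ yet $a_f(p^n)=0$ for some $n\ge 1$. The lemma then holds for every prime $p$ larger than all exceptional primes and all prime divisors of $N$. Recall that the Hecke eigenvalue field $\K_f$ is a fixed number field containing all the $a_f(n)$, and that since $f$ has trivial character the weight $k$ is even.

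So fix an exceptional $p\nmid N$, let $\alpha_p,\beta_p$ be the roots of $x^2-a_f(p)x+p^{k-1}$, and set $\gamma_p=\alpha_p/\beta_p$. The Lucas-sequence identity attached to the recursion $a_f(p^{m+1})=a_f(p)a_f(p^m)-p^{k-1}a_f(p^{m-1})$ (with $a_f(1)=1$) gives $a_f(p^n)=(\alpha_p^{n+1}-\beta_p^{n+1})/(\alpha_p-\beta_p)$ when $\alpha_p\neq\beta_p$, and $a_f(p^n)=(n+1)\alpha_p^{\,n}$ when $\alpha_p=\beta_p$. In the equal-root case $\alpha_p=a_f(p)/2\neq 0$, so $a_f(p^n)\neq 0$ for all $n$, contradicting exceptionality; hence $\alpha_p\neq\beta_p$, and $a_f(p^n)=0$ forces $\alpha_p^{n+1}=\beta_p^{n+1}$, so $\gamma_p$ is a root of unity of some exact order $d$. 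Since $\beta_p=a_f(p)-\alpha_p\in\K_f(\alpha_p)$, we have $\gamma_p\in\K_f(\alpha_p)$, which is a number field of degree at most $2[\K_f:\Q]$ over $\Q$; thus $\Q(\zeta_d)=\Q(\gamma_p)\subseteq\K_f(\alpha_p)$ has degree $\varphi(d)\le 2[\K_f:\Q]$, and therefore $d\le M_0$ for a constant $M_0$ depending only on $\K_f$.

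Next I extract the shape of $a_f(p)$. From $\gamma_p^{\,d}=1$ we get $\alpha_p^{\,2d}=(\alpha_p\beta_p)^d=p^{(k-1)d}$, so $\zeta_p:=\alpha_p/p^{(k-1)/2}$, with $p^{(k-1)/2}$ the positive real square root of $p^{k-1}$, is a root of unity of order dividing $2d$. Hence $\beta_p=p^{(k-1)/2}/\zeta_p$ and
$$
a_f(p)=\alpha_p+\beta_p=c_p\,p^{(k-1)/2},\qquad c_p:=\zeta_p+\zeta_p^{-1}.
$$
As $\zeta_p$ ranges over the roots of unity of order at most $2M_0$, the number $c_p$ — equivalently $c_p^2=a_f(p)^2/p^{k-1}\in\K_f$ — takes only finitely many values, and $c_p\neq 0$ because $a_f(p)\neq 0$.

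Finally I invoke the evenness of $k$. Writing $p^{k-1}=p\cdot(p^{(k-2)/2})^2$, the relation $a_f(p)^2=c_p^2\,p^{k-1}$ becomes $a_f(p)^2=c_p^2\,p\,(p^{(k-2)/2})^2$, so
$$
\sqrt p=\pm\,\frac{a_f(p)}{c_p\,p^{(k-2)/2}}\in\K_f(c_p).
$$
Since $c_p$ lies in a fixed finite set, $\K_f(c_p)$ runs over only finitely many number fields, each of which has only finitely many quadratic subfields; and distinct primes $p$ yield distinct fields $\Q(\sqrt p)$. Hence $\Q(\sqrt p)\subseteq\K_f(c_p)$ can hold for only finitely many primes $p$, so there are only finitely many exceptional primes, and the lemma follows. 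The single substantive step is the passage from ``$\gamma_p$ is a root of unity'' to ``$a_f(p)$ is a fixed algebraic multiple of $p^{(k-1)/2}$''; after that the finiteness is a soft consequence of $\K_f$ being one fixed field, and nothing deeper than elementary algebraic number theory is needed (the Deligne bound $|a_f(p)|\le 2p^{(k-1)/2}$, whence $c_p\in[-2,2]$, is available but not essential).
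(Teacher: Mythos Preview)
Your argument is correct. The paper itself does not supply a proof of this lemma but simply cites \cite[Lemma~2.2]{KRW} and \cite[Lemma~2.5]{MModd}; your write-up is a clean, self-contained version of essentially the standard argument found in those references: reduce to $\gamma_p=\alpha_p/\beta_p$ being a root of unity of bounded order (via the degree bound $[\K_f(\alpha_p):\Q]\le 2[\K_f:\Q]$), deduce that $a_f(p)^2/p^{k-1}$ lies in a fixed finite set of nonzero algebraic numbers, and then use evenness of $k$ (legitimate here since the ambient hypothesis in \S3.1 is trivial nebentypus) to force $\sqrt{p}$ into one of finitely many number fields, which can absorb only finitely many such square roots. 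One small remark: your parenthetical that the Deligne bound is ``available but not essential'' is accurate---nothing in your argument uses it---and your handling of the degenerate case $\alpha_p=\beta_p$ via the explicit formula $a_f(p^n)=(n+1)\alpha_p^{\,n}$ is the cleanest way to dispose of it.
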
	
For any prime $p \nmid N$ and natural number $n$, we have the following recurrence formula
$$
a_f({p}^{n+1}) = a_f(p) a_f(p^{n}) - p^{k-1} a_f(p^{n-1}).
$$
Thus for all $n \ge 0$, we have
\begin{equation}\label{apL}
a_f(p^{n}) = \frac{ \sa_{p}^{n+1} -  \sb_{p}^{n+1}}{\ap - \bp} ~, 
\end{equation}
where $\ap, \bp$ are the roots of the polynomial $x^2 -a_f(p)x+ p^{k-1}$.
From \eqref{apL} and \lemref{afpn0}, we note that for all $p$ sufficiently large,
 either $a_f(p) = 0, \pm 2p^{\frac{k-1}{2}}$ (when $\sa_p = \sb_p$) or 
 $\gamma_{f, p} = \sa_p/\sb_p$ is not a root of unity. Also note that 
 $a_f(p) = 0, \pm 2p^{\frac{k-1}{2}}$ is equivalent to saying that 
 $\theta_p \in \{0, \pi/2, \pi\}$. Hence from The Sato-Tate theorem 
 (see also Serre \cite[Cor. 2, \S7]{Sr}) and \lemref{afpn0}, 
 it follows that (see \cite[Prop. 2.1]{BDGL}).
\begin{lem}\label{gunity}
Let $f$ be a non-CM primitive cusp form of weight $k$, level $N$. The 
set $P_f$ of primes $p \nmid N$ such that $\gamma_{f, p}$ is not a 
root of unity has density equal to $1$.
\end{lem}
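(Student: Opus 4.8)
The plan is to rewrite the condition ``$\gamma_{f,p}$ is a root of unity'' in terms of the Satake angle $\theta_p$, to use \lemref{afpn0} to show that this condition forces $\theta_p$ into a \emph{finite} set of exceptional values, and then to apply the Sato--Tate theorem (\thmref{ST}) to that finite set. The subtlety to be careful about is that Sato--Tate equidistribution by itself says nothing about the countable (dense) set of primes with $e^{2i\theta_p}$ a root of unity, since that is not a continuity set for the Sato--Tate measure; the whole point of invoking \lemref{afpn0} is to cut the relevant set of angles down to something finite, on which equidistribution does yield density zero.

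First I would record the shape of the Satake parameters. By Deligne's bound write $a_f(p) = 2 p^{(k-1)/2}\cos\theta_p$ with $\theta_p \in [0,\pi]$; then the roots of $x^2 - a_f(p) x + p^{k-1}$ are $\alpha_p = p^{(k-1)/2} e^{i\theta_p}$ and $\beta_p = p^{(k-1)/2} e^{-i\theta_p}$, so $\gamma_{f,p} = \alpha_p/\beta_p = e^{2 i \theta_p}$. In particular $\alpha_p = \beta_p$ happens exactly when $\cos\theta_p = \pm 1$, i.e.\ when $\theta_p \in \{0,\pi\}$ (equivalently $a_f(p) = \pm 2 p^{(k-1)/2}$), and in that case $\gamma_{f,p}=1$.

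The key step is the reduction. Fix $p \nmid N$ large enough for \lemref{afpn0} to apply and suppose $\gamma_{f,p}$ is a root of unity. If $\alpha_p = \beta_p$ then $\theta_p \in \{0,\pi\}$ by the previous paragraph. Otherwise $\gamma_{f,p} \ne 1$, so it has exact order $m \ge 2$; then $\alpha_p^{m} = \beta_p^{m}$, and \eqref{apL} gives
$$
a_f(p^{m-1}) = \frac{\alpha_p^{m} - \beta_p^{m}}{\alpha_p - \beta_p} = 0
$$
with $m-1 \ge 1$. Since \lemref{afpn0} says that for $p$ large either $a_f(p)=0$ or $a_f(p^n)\ne 0$ for all $n\ge 1$, and the second alternative just failed, we must have $a_f(p) = 0$, i.e.\ $\theta_p = \pi/2$. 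Hence for all sufficiently large $p \nmid N$, if $\gamma_{f,p}$ is a root of unity then $\theta_p \in \{0, \pi/2, \pi\}$.

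It remains to see that $\{\,p : \theta_p \in \{0,\pi/2,\pi\}\,\}$ has density zero. For each $c \in \{0,\pi/2,\pi\}$ and each $\delta>0$, \thmref{ST} gives
$$
\limsup_{x\to\infty}\ \frac{\#\{p\le x: \theta_p = c\}}{\#\{p\le x\}}\ \le\ \frac{2}{\pi}\int_{(c-\delta,\,c+\delta)\cap[0,\pi]}\sin^2\theta\,d\theta,
$$
and letting $\delta \to 0$ forces the right-hand side to $0$; so each of the three sets $\{p:\theta_p=c\}$ has density $0$, hence so does their union. Adjoining the finitely many small primes and the finitely many primes dividing $N$ does not change densities, so the complementary set $P_f = \{\,p\nmid N : \gamma_{f,p}\text{ not a root of unity}\,\}$ has density $1$. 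I expect the reduction via \lemref{afpn0} to be the only genuinely non-trivial ingredient; the non-CM hypothesis is used solely to make \thmref{ST} available.
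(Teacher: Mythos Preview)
Your proof is correct and follows essentially the same approach as the paper: reduce via \lemref{afpn0} to the condition $\theta_p \in \{0, \pi/2, \pi\}$, then apply the Sato--Tate theorem (\thmref{ST}) to conclude that this exceptional set of primes has density zero. The paper's argument is sketched in the paragraph immediately preceding \lemref{gunity}, and your write-up simply fills in the details that the paper leaves implicit.
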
 

\subsection{Prerequisites from Hilbert cusp forms}
We summarize some basic properties of Fourier coefficients of Hilbert 
modular forms (see \cite{Sh} for further details). 
Let $F \subset \R$ be a Galois extension of $\Q$ of odd degree $n$,
$\cO_F$ be its ring of integers and $\fn$ be an integral ideal of $\cO_F$. 
Let $h_F$ be the narrow class number of $F$ 
and $\{t_v\}_{v=1}^{h_F}$ be a complete set of representatives of the narrow 
class group. For each $t_v$, define a subgroup $\Gamma_v(\fn)$ of $GL_2(F)$ by
$$
\Gamma_v(\fn) = \left\{ \begin{pmatrix} a & {t_v}^{-1}b \\ t_v c & d \end{pmatrix} 
                 \in GL_2(F) ~~|~~ a,d \in \cO_F, b \in \fD_F^{-1}, c \in  \fn\fD_F, 
                 ad- bc \in \cO_F^{\times} \right\},
$$
where $\fD_F$ is the different ideal of $F$. \newline

A Hilbert modular form $f_v$ of weight $\bk = (k_1,...,k_n)$ with respect to 
$\Gamma_v(\fn)$ has a Fourier expansion as follows
$$
f_v(z) = \sum_{\xi \in F} a_v(\xi) e^{2 \pi i \xi z},
$$
where $\xi$ runs over all totally positive elements in $t_v^{-1}\cO_F$, $\xi = 0$ and $z \in \H^n$. 

We write $\bf = (f_1,...,f_{h_F})$, where $f_v$'s $(v=1,...,h_F)$ are Hilbert modular forms of 
weight $\bk$ with respect to $\Gamma_v(\fn)$. From now on, by a primitive 
Hilbert cusp form $\bf$ of weight $\bk$ and level $\fn$ with trivial character, 
we mean $\bf$ is a normalized Hecke Hilbert cusp eigenform lying in the newform space. 
Let $\fm \subseteq \cO_F$ be an integral ideal. Write 
$\fm = \xi t_v^{-1} \cO_F$ with a totally positive element $\xi \in F$. Then the 
Fourier coefficients of $\bf$ for such $\fm$ are
$$
c(\fm, \bf)= c(\fm) = {N(\fm)}^{\frac{k_0}{2}} a_v(\xi) {\xi}^{-\frac{\bk}{2}}
$$
where $\bk = (k_1,...,k_n) \in \N^n$ and $k_0 = \max\{k_1,...,k_n\}$.
If $\fm$ is not an integral ideal, we put $c(\fm)=0$. 
We shall further 
assume that $k_1 = ... = k_n$ and are even. 
Hence from here onwards the 
primitive Hilbert cusp forms considered are of weight $2\bk = (2k,...,2k)$. 

We require the following result on the Fourier coefficients of primitive Hilbert cusp 
forms proved by Shimura \cite{Sh}.
 
\begin{prop}\label{shi}
Let $\bf$ be a primitive Hilbert cusp form of weight $2\bk$ and level $\fn$
with trivial character. Then
\begin{itemize}
	\item For any co-prime ideals $\fm_1, \fm_2 \subset \cO_F$, one has 
	      $$
	      c(\fm_1 \fm_2) = c(\fm_1) c(\fm_2)
	      $$
	\item For $n \in \N$ and any prime $\fp \nmid \fn \fD_F$, one has
	      $$
	      c({\fp}^{n+1}) = c(\fp) c({\fp}^n) - {N_F(\fp)}^{2k-1} c({\fp}^{n-1}).
	      $$
\end{itemize}
\end{prop}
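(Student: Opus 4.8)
The plan is to deduce both identities from the theory of Hecke operators on the space $S_{2\bk}(\fn)$ of Hilbert cusp forms of weight $2\bk$ and level $\fn$ with trivial character, combined with the fact that a primitive form lying in the newform space is a \emph{normalized} simultaneous eigenform. (Equivalently, one can package everything into the Euler product $\sum_{\fm} c(\fm) N_F(\fm)^{-s} = \prod_{\fp} \bigl(1 - c(\fp) N_F(\fp)^{-s} + N_F(\fp)^{2k-1-2s}\bigr)^{-1}$ over the good primes; the statement is just the coefficient-wise translation of this identity, but I will argue directly with operators.)

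First I would recall the Hecke operator $T_{\fm}$ attached to an integral ideal $\fm \subseteq \cO_F$, defined by the usual double-coset sum over sublattices with the weight-$2\bk$ slash action, normalized so as to be compatible with the factors in $c(\fm) = N_F(\fm)^{k_0/2} a_v(\xi)\,\xi^{-\bk/2}$; with this normalization $T_{\fm}$ preserves $S_{2\bk}(\fn)$ and acts on Fourier coefficients by an explicit formula. The structural input is the multiplicativity of these operators: $T_{\fm_1} T_{\fm_2} = T_{\fm_1\fm_2}$ whenever $(\fm_1,\fm_2) = \cO_F$, and, for a prime $\fp \nmid \fn\fD_F$ and $n \geq 1$, the recursion $T_{\fp} T_{\fp^n} = T_{\fp^{n+1}} + N_F(\fp)^{2k-1}\, T_{\fp^{n-1}}$. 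Both are verified by an explicit manipulation of coset representatives, completely parallel to the classical $\SL_2(\Z)$ computation; the only genuinely new bookkeeping is keeping track of how the cosets interact with the components $\Gamma_v(\fn)$ indexed by the narrow class group of $F$.

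Next, since $\bf$ is a Hecke eigenform in the newform space, it is a simultaneous eigenform: $T_{\fm}\bf = \lambda(\fm)\bf$ for scalars $\lambda(\fm)$. I would then invoke the explicit action on Fourier expansions — the $\fa$-th coefficient of $T_{\fm}\bf$ equals $\sum_{\fd \mid (\fa,\fm)} N_F(\fd)^{2k-1}\, c(\fa\fm\fd^{-2})$ — and specialize to $\fa = \cO_F$, where only $\fd = \cO_F$ survives, to get that the $\cO_F$-coefficient of $T_{\fm}\bf$ is $c(\fm)$. Comparing with the $\cO_F$-coefficient of $\lambda(\fm)\bf$, which is $\lambda(\fm)\,c(\cO_F) = \lambda(\fm)$ by the normalization $c(\cO_F) = 1$, gives $\lambda(\fm) = c(\fm)$ for every $\fm$. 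Transporting the operator identities of the previous paragraph through this identification yields $c(\fm_1)c(\fm_2) = c(\fm_1\fm_2)$ for coprime $\fm_1,\fm_2$ and $c(\fp)c(\fp^n) = c(\fp^{n+1}) + N_F(\fp)^{2k-1} c(\fp^{n-1})$ for $\fp \nmid \fn\fD_F$, which is exactly the assertion.

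The main obstacle is bookkeeping, not ideas: one must track how $T_{\fm}$ permutes and mixes the components $f_v$ attached to the narrow ideal classes, and confirm that the normalization built into $c(\fm)$ is precisely the one for which the weight $2\bk = (2k,\dots,2k)$ slash action produces the clean power $N_F(\fp)^{2k-1}$ (for a general weight $\bk$ one instead picks up $N_F(\fp)^{2k_0-1}$ together with a Nebentypus-type character, which is why the hypothesis $k_1 = \cdots = k_n$ even is imposed). Since all of this is carried out by Shimura in \cite{Sh}, in practice one simply cites it; the outline above is the route to reprove it from first principles.
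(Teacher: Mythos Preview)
The paper does not give a proof of this proposition at all: it is stated with the preface ``We require the following result on the Fourier coefficients of primitive Hilbert cusp forms proved by Shimura \cite{Sh}'' and then simply quoted. So there is nothing to compare your argument against; the paper's ``proof'' is a bare citation.

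Your outline is the standard one and is essentially what Shimura does in \cite{Sh}: establish the multiplicativity and prime-power recursion at the level of the Hecke operators $T_{\fm}$, then use that a normalized newform is a simultaneous eigenform with $c(\cO_F)=1$ to identify the eigenvalue $\lambda(\fm)$ with $c(\fm)$. Your remarks about the bookkeeping over the narrow class group and the role of the parallel-even-weight hypothesis are accurate. In the context of this paper, the appropriate move is exactly what you say at the end: cite \cite{Sh} and move on.
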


An analogue of the Deligne bound for Fourier coefficients of Hilbert cusp forms
was proved by Blasius \cite{DB}.
\begin{prop}\label{lem2}
Let $\bf$ be as in \propref{shi}. Then for any prime ideal 
$\fp \subset \cO_F$, one has
\begin{eqnarray*}
| c(\fp) | \leq  2N_F(\fp)^{\frac{2k - 1}{2}}.
\end{eqnarray*}
\end{prop}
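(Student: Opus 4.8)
The bound is the Ramanujan--Petersson property of $\bf$, and the plan is to obtain it by the motivic route. For a prime $\fp\nmid\fn\fD_F$, the inequality $|c(\fp)|\le 2N_F(\fp)^{(2k-1)/2}$ is equivalent to the assertion that the two roots $\afp,\bfp$ of $x^2-c(\fp)x+N_F(\fp)^{2k-1}$ satisfy $|\afp|=|\bfp|=N_F(\fp)^{(2k-1)/2}$; granting this, $|c(\fp)|=|\afp+\bfp|\le|\afp|+|\bfp|=2N_F(\fp)^{(2k-1)/2}$. For the finitely many $\fp\mid\fn\fD_F$ the local component of the automorphic representation of $\bf$ is special or supercuspidal (or a ramified principal series), and the classification of such representations forces $c(\fp)=0$ or a bound even stronger than the one asserted; so the whole content lies at the good primes.

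For those, first I would attach to $\bf$, using the newform hypothesis and strong multiplicity one, the cuspidal automorphic representation $\pi=\pi_{\bf}$ of $GL_2(\mathbb{A}_F)$ it generates, and record that after the arithmetic normalization $c(\fm)=N(\fm)^{k_0/2}a_v(\xi)\xi^{-\bk/2}$ built into the definition, the unramified local $L$-factor of $\pi$ at $\fp$ is $(1-\afp N_F(\fp)^{-s})^{-1}(1-\bfp N_F(\fp)^{-s})^{-1}$ with $\afp\bfp=N_F(\fp)^{2k-1}$, matching $\propref{shi}$. Since $[F:\Q]$ is odd, there is a quaternion algebra over $F$ ramified at exactly one archimedean place; transferring $\pi$ to it by Jacquet--Langlands produces a cohomological form on the associated quaternionic Shimura curve $S$. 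One then builds a compatible system of two-dimensional $\ell$-adic Galois representations $\rho_{\bf,\ell}\colon\mathrm{Gal}(\overline{F}/F)\to GL_2(\overline{\Q}_\ell)$ inside the $(2k-1)$-st $\ell$-adic \'etale cohomology of the $(2k-2)$-fold Kuga--Sato fibre product of the universal abelian variety over $S$ (and inside $H^1$ of $S$ itself when $k=1$), normalized so that for $\fp\nmid\fn\fD_F\ell$ the characteristic polynomial of $\mathrm{Frob}_\fp$ is exactly $x^2-c(\fp)x+N_F(\fp)^{2k-1}$. This Kuga--Sato variety has good reduction at such $\fp$, so by the purity part of Deligne's proof of the Weil conjectures (Weil~II) every eigenvalue of $\mathrm{Frob}_\fp$ on this cohomology is an algebraic number all of whose archimedean absolute values equal $N_F(\fp)^{(2k-1)/2}$; hence $|\afp|=|\bfp|=N_F(\fp)^{(2k-1)/2}$ and the bound follows.

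The main obstacle is exactly the construction and bookkeeping in the middle step: (i) setting up the quaternionic Shimura curve and its Kuga--Sato compactification so that the weight-$(2k-1)$ motivic piece carries $\pi$ --- this is where the oddness of $[F:\Q]$ is indispensable, since it is what guarantees a Shimura \emph{curve} (rather than a higher-dimensional Shimura variety) whose cohomology sees $\pi$; (ii) pinning down the archimedean and $\fp$-adic normalizations so that the geometric Frobenius eigenvalues are precisely $\afp,\bfp$ and not some Tate twist; and (iii) disposing of the ramified primes and of the minimal weight case. All of this is carried out in Blasius's work \cite{DB}, building on Carayol, Taylor, Brylinski--Labesse and others, and for the purposes of this article we simply invoke it.
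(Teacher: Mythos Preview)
The paper does not actually prove this proposition: it simply attributes the bound to Blasius \cite{DB} and states it without argument. Your proposal likewise concludes by invoking \cite{DB}, so in the end you take the same route as the paper; the difference is only that you sketch the content of Blasius's proof (Jacquet--Langlands transfer to a quaternionic Shimura curve available because $[F:\Q]$ is odd, realization of $\rho_{\bf,\ell}$ in the cohomology of a Kuga--Sato variety, and Deligne's purity), whereas the paper treats the result as a black box.
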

An analogue of \lemref{afpn0} is also known in the context of Hilbert cusp forms (see \cite[p.10]{ASP}). Hence by applying Sato-Tate theorem for Hilbert cusp forms, we have the following lemma.
\begin{lem}\label{gunityH}
Let $\bf$ be as in \propref{shi}. Also let $\bf$ be non-CM. Then the set $P_{\bf}$ of prime ideals 
$\fp \subset \cO_F$ such that $\fp \nmid \fn \fD_F$ and $\gamma_{\bf, \fp}$ is not a root of unity has density equal to $1$.
\end{lem}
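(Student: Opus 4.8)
The plan is to carry over the argument behind \lemref{gunity} to the Hilbert setting, combining Blasius's bound \propref{lem2}, the Hecke recurrence \propref{shi}, the Hilbert analogue of \lemref{afpn0} (see \cite[p.10]{ASP}), and the Sato--Tate theorem for Hilbert cusp forms \cite{BGG}. For a prime ideal $\fp \nmid \fn\fD_F$, \propref{lem2} lets us write $c(\fp) = 2 N_F(\fp)^{\frac{2k-1}{2}}\cos\theta_{\fp}$ with $\theta_{\fp}\in[0,\pi]$, so that the roots of $x^2 - c(\fp)x + N_F(\fp)^{2k-1}$ are $\afp = N_F(\fp)^{\frac{2k-1}{2}}e^{i\theta_{\fp}}$ and $\bfp = N_F(\fp)^{\frac{2k-1}{2}}e^{-i\theta_{\fp}}$. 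Then $\gamma_{\bf,\fp} = \afp/\bfp = e^{2i\theta_{\fp}}$, which is a root of unity exactly when $\theta_{\fp}$ is a rational multiple of $\pi$.

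The next step is to show that for all sufficiently large $\fp$ with $\fp\nmid\fn\fD_F$, if $\gamma_{\bf,\fp}$ is a root of unity then $\theta_{\fp}\in\{0,\pi/2,\pi\}$, equivalently $c(\fp)\in\{0,\pm 2N_F(\fp)^{\frac{2k-1}{2}}\}$. When $\afp\ne\bfp$, iterating \propref{shi} yields $c(\fp^m) = \bfp^m\,(\gamma_{\bf,\fp}^{m+1}-1)/(\gamma_{\bf,\fp}-1)$ for every $m\ge 0$. If $\gamma_{\bf,\fp}$ had order $r\ge 3$, this would give $c(\fp)\ne 0$ while $c(\fp^{r-1})=0$ with $r-1\ge 1$, contradicting the Hilbert analogue of \lemref{afpn0} once $N_F(\fp)$ is large enough. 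The only remaining possibilities for $\gamma_{\bf,\fp}$ to be a root of unity are $\gamma_{\bf,\fp}=1$, forcing $\afp=\bfp$ and hence $\cos\theta_{\fp}=\pm 1$, i.e. $\theta_{\fp}\in\{0,\pi\}$, and $\gamma_{\bf,\fp}=-1$, forcing $\cos\theta_{\fp}=0$, i.e. $\theta_{\fp}=\pi/2$ and $c(\fp)=0$. This establishes the claim.

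Finally, by the Sato--Tate theorem for Hilbert cusp forms \cite{BGG}, the angles $\theta_{\fp}$ equidistribute in $[0,\pi]$ against $\frac{2}{\pi}\sin^2\theta\,d\theta$ as $\fp$ ranges over prime ideals not dividing $\fn\fD_F$. Since $\{0,\pi/2,\pi\}$ is a null set for this continuous measure, covering it by finitely many short intervals of arbitrarily small total mass and applying the equidistribution to those intervals shows that $\{\fp : \theta_{\fp}\in\{0,\pi/2,\pi\}\}$ has density zero. Adjoining the finitely many prime ideals dividing $\fn\fD_F$ and the finitely many small $\fp$ where the Hilbert analogue of \lemref{afpn0} might fail still yields a density-zero exceptional set, whose complement is contained in $P_{\bf}$; hence $P_{\bf}$ has density one.

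The genuinely deep inputs, namely Sato--Tate for Hilbert cusp forms and the non-vanishing dichotomy ``$c(\fp)=0$ or $c(\fp^n)\ne 0$ for all $n$'', are already available in the literature, so the remainder is bookkeeping. The only point requiring some care is the passage from equidistribution on subintervals $[c,d]\subseteq[0,\pi]$ to the density-zero conclusion for the finite exceptional set $\{0,\pi/2,\pi\}$, especially since $0$ and $\pi$ sit at the ends of the ambient interval; this is handled by the covering argument indicated above.
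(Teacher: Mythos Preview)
Your proposal is correct and follows essentially the same approach as the paper, which only sketches the argument by pointing to the Hilbert analogue of \lemref{afpn0} (from \cite{ASP}) and the Sato--Tate theorem for Hilbert cusp forms \cite{BGG}; you have simply made explicit the reduction to $\theta_{\fp}\in\{0,\pi/2,\pi\}$ that the paper spells out only in the elliptic case preceding \lemref{gunity}.
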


\subsection{Height of an algebraic number}
Let $K$ be a number field and $M_K$ be the set of places of $K$ normalized to 
extend the places of $\Q$. Also let $M_K^{\infty}$ be the subset of $M_K$ 
of infinite (Archimedean) places. For any prime $p$, 
let $\Q_p$ be the field of $p$-adic numbers and for each $v \in M_K$, let $K_v$ 
be the completion of $K$ with respect to the place $v$. For $v \in M_K$, let $d_v$ denotes 
the local degree at $v$ given by $d_v = [K_v : \Q_p]$, if $v \mid p$ 
and $d_v = [K_v : \R]$, if $v \in M_{K}^{\infty}$.
For $\alpha \in K\setminus \{0\}$, the usual absolute logarithmic height of $\sa$  is defined by
$$
h(\sa) = \frac{1}{[K: \Q]} \sum_{v \in M_K} d_v \log^+ |\sa|_v =  \frac{h_K(\sa)}{[K: \Q]} 
$$
where $\log^+ = \max\{\log , 0\}$, $h_K(\sa) = \sum_{v \in M_K} d_v \log^+ |\sa|_v$. It can be deduced from the definition of height that
\begin{equation}\label{hgt}
	h(\sa) = \frac{1}{[K:\Q]} \( \sum_{\sg : K \hookrightarrow \C} \log^{+}|\sg(\sa)| 
	~+~ \sum_{\fp} \max\{ 0, -\nu_{\fp}(\sa)\} \log N_K(\fp) \),
\end{equation}
where the first sum runs over the embeddings of $K$ in $\C$ and the second 
sum runs over the prime ideals of $K$. We have the following properties of $h(\cdot)$: For any non-zero algebraic numbers $\sa , \sb$ we have
\begin{itemize}
	\item $h(\sa \sb) \leq h(\sa) + h(\sb)$
	\item $h(\sa + \sb) \leq h(\sa) + h(\sb) + \log 2$ 
	\item $h(\sa^n) = |n| h(\sa), ~ n \in \Z$.
\end{itemize}

\subsection{Prerequisites from Lucas sequence}
If $\sa, \sb$ are algebraic integers such that $\sa+\sb$, $\sa \sb$ are 
non-zero co-prime rational integers and $\sa/\sb$ is not a root of unity, 
then $(\sa , \sb)$ is called a Lucas pair. Given a Lucas pair $(\sa , \sb)$,
the corresponding sequence of Lucas numbers is defined by
$$
u_n = u_n(\sa, \sb) =  \frac{{\sa}^n - {\sb}^n}{\sa - \sb} ~,~~  n= 0, 1,2, \cdots
$$
\begin{defn}
Let $(\sa , \sb)$ be a Lucas pair. A rational prime $p$ is called a
primitive divisor of $u_n$ if $p|u_n$ and $p\nmid (\sa - \sb)^2 u_1\cdots u_{n-1}$.
\end{defn}
Schinzel \cite{Sc} proved that there exists a positive absolute constant 
$n_0$ such that if $n > n_0$, then $u_n$ has a primitive divisor 
for any Lucas pair $(\sa , \sb)$ (see \cite{St2}, \cite{ Stdiv} for historical account). 
Stewart \cite{St} was the first to make the constant $n_0$ explicit by showing that
$n_0$ can be taken to be $e^{452} 2^{67}$ and this constant was further 
refined by Voutier (\cite{Vot1}, \cite{Vot2}). Finally Bilu, Hanrot and Voutier \cite{YHV} proved 
that $n_0$ can be taken to be $30$ and this is the best possible.
Schinzel \cite{Sc} in fact proved a general theorem.  Before stating his result,
we need to introduce the following definition.
\begin{defn}
Let $A, B$ be non-zero algebraic integers such that $A/B$ is not a root of unity
and $K$ be a field containing $A+B$ and $AB$.  Also let 
$$
u_n = u_n(A, B) =  \frac{{A}^n - {B}^n}{A - B} ~,~~  n= 0, 1,2, \cdots
$$
A prime ideal $\fp$ in $K$ is called a primitive divisor of $u_n$ if 
$\fp |u_n$ and $\fp \nmid AB(A - B)^2 u_1\cdots u_{n-1}$.
\end{defn}
In this set up, Schinzel showed the following

\begin{thm}\label{primdiv}
Let $A, B$ be non-zero algebraic integers
such that $A/B$ is not a root of unity and $(A,B) = \cD$
in $\Q(A, B)$.  Also let $K$ be a field containing $A+B$, $AB$
and 
$$
u_n = u_n(A, B) =  \frac{{A}^n - {B}^n}{A - B} ~,~~  n= 0, 1,2, \cdots
$$
Then $u_n$ has a primitive divisor for all $n > \max\{ n_0(d), \varphi(\fd)\}$,
where $\fd$ is the radical of the contraction of the ideal $\cD$ in $\Q\(A/B\)$, $d = [\Q\(A/B \) : \Q]$ and 
$\varphi(\fd) = N(\fd) \prod_{\fp | \fd} (1 - {N(\fp)}^{-1} )$.
\end{thm}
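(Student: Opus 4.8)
The plan is to reduce the statement to the situation in which $A$ and $B$ generate coprime ideals, and to locate the primitive divisor among the prime ideals dividing the $n$-th cyclotomic value of the ratio $\gamma=A/B$. Working in $L=\Q(A/B)$, of degree $d$ over $\Q$, one has the factorisation
$$
u_n(A,B)\;=\;B^{n-1}\,\frac{\gamma^n-1}{\gamma-1}\;=\;\prod_{\substack{e\mid n\\ e>1}}B^{\varphi(e)}\Phi_e(\gamma),
$$
where $\Phi_e$ is the $e$-th cyclotomic polynomial and each factor $B^{\varphi(e)}\Phi_e(\gamma)=\prod_{\zeta}(A-\zeta B)$, the product over primitive $e$-th roots of unity $\zeta$, is an algebraic integer. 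Hence a prime ideal $\fp$ of $K$ is a primitive divisor of $u_n$ precisely when it divides $B^{\varphi(n)}\Phi_n(\gamma)$ but divides none of $AB$, $(A-B)^2$, $u_1,\dots,u_{n-1}$; equivalently, after passing to a field $M$ containing $K$, $\gamma$ and a primitive $n$-th root of unity, when there is a prime $\fP$ of $M$ above $\fp$ for which $\gamma$ is a $\fP$-unit and $\operatorname{ord}_{\fP}(\gamma)=n$. So it suffices to exhibit a prime of $M$ at which $\gamma$ has multiplicative order exactly $n$.

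The next ingredient is cyclotomic: a Birkhoff--Vandiver-type lemma showing that if $\fP$ is a $\fP$-unit prime for $\gamma$ dividing $\Phi_n(\gamma)$ with $\operatorname{ord}_{\fP}(\gamma)=m<n$, then $n/m$ is a prime power $p^k$, $\fP$ lies above $p$, and $\nu_{\fP}(\Phi_n(\gamma))$ is bounded purely in terms of the ramification of $p$ in $M$. One also checks that the numerator of the fractional ideal $\Phi_n(\gamma)\cO_M$ is supported only on primes at which $\gamma$ is a unit (primes dividing the numerator or denominator of $\gamma\cO_M$ either contribute nothing or lie in the denominator). Consequently every prime divisor of $B^{\varphi(n)}\Phi_n(\gamma)$ that is \emph{not} a primitive divisor of $u_n$ is either a prime lying above a rational prime $p\mid n$ --- whose aggregate contribution to $\log$ of the norm of $\Phi_n(\gamma)$ is $O(\log n)$ in a suitable normalisation --- or a prime lying above the content $\cD$. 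Comparing the bad contribution with a lower bound for the size of $\Phi_n(\gamma)$, which grows like $\varphi(n)\,[L:\Q]\,h(\gamma)$ and is large once $n$ exceeds a constant $n_0(d)$ depending only on $d$ (this is the substantive analytic input: the degree-$d$ extension of the Bang--Zsygmondy / Bilu--Hanrot--Voutier theorem, resting on effective lower bounds for linear forms in logarithms together with $\gamma$ not being a root of unity), yields for $n>n_0(d)$ a prime $\fP\mid\Phi_n(\gamma)$ at which $\gamma$ is a unit with $\operatorname{ord}_{\fP}(\gamma)=n$.

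Finally one must exclude $\fP$ lying above $\cD$, and this is exactly where the hypothesis $n>\varphi(\fd)$ enters. With $\fd$ the radical of the contraction of $\cD$ to $L$, the unit group of $\cO_L/\fd$ has order $\varphi(\fd)$; since $\fd$ is squarefree, any prime $\mathfrak{q}\mid\fd$ satisfies $N(\mathfrak{q})-1\le\varphi(\fd)$, so the order of $\gamma$ modulo $\mathfrak{q}$ (when it is defined) divides a number $<n$, and therefore no prime above $\cD$ can have $\gamma$ of order $n$ once $n>\varphi(\fd)$. Hence for $n>\max\{n_0(d),\varphi(\fd)\}$ the prime $\fP$ produced above is a $\fP$-unit prime for $\gamma$ of order exactly $n$ and divides none of $AB(A-B)^2u_1\cdots u_{n-1}$; its restriction $\fP\cap K$ is then a primitive divisor of $u_n$ in $K$, as required. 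The part I expect to be the main obstacle is not the height / linear-forms input, which is imported as $n_0(d)$, but the careful cross-referencing of prime ideals among $\Q(A,B)$ (where $\cD$ lives), $L=\Q(A/B)$ (where $\Phi_n(\gamma)$ and $\fd$ live) and $K$ --- in particular matching up $\cD$, $\fd$ and the content of $\gamma$ --- together with verifying that $\varphi(\fd)$, and not some coarser quantity, is precisely the threshold that eliminates the content primes.
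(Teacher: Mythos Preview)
The paper does not prove this theorem. It is quoted in \S3.4 as a result of Schinzel \cite{Sc}, with the remark that the constant $n_0(d)$ was later made explicit by Stewart and sharpened by Bilu--Luca; the paper then \emph{uses} \thmref{primdiv} as a black box in the proofs of Theorems~\ref{tau}, \ref{EHE} and \ref{thm1}. So there is no ``paper's own proof'' to compare your attempt against.

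That said, your sketch is recognisably the Schinzel--Stewart strategy: factor $u_n$ through cyclotomic values $\Phi_e(\gamma)$ with $\gamma=A/B$, control the non-primitive primes dividing $\Phi_n(\gamma)$ by the Birkhoff--Vandiver mechanism (they lie over rational primes $p\mid n$ and contribute $O(\log n)$), and beat this with a lower bound for $|\Phi_n(\gamma)|$ of order $\varphi(n)\,h(\gamma)$ coming from linear forms in logarithms. Your handling of the content primes is the right idea --- the multiplicative order of $\gamma$ modulo any prime $\mathfrak q\mid\fd$ divides $N(\mathfrak q)-1\le\varphi(\fd)$, so once $n>\varphi(\fd)$ no such prime can have $\gamma$ of exact order $n$ --- though you should be more careful that the primes of $M$ at which $\gamma$ fails to be a unit are exactly those lying above $\fd$ (this is where the passage from $\cD\subset\Q(A,B)$ to its contraction in $L=\Q(A/B)$ matters, as you yourself flag at the end). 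If you want to turn this into a self-contained proof rather than a citation, the references to consult are Schinzel's original paper and Stewart's 1977 article, where these bookkeeping steps are carried out precisely.
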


\begin{rmk}
Stewart \cite{St} was the first to make $n_0(d)$ explicit.  The best known result is by
Bilu and Luca \cite{YF} who proved that $n_0(d)$ can be taken to be 
$\max\{2^{d+1}, 10^{30}d^9\}$.
\end{rmk}

We also need the following lemma concerning norm of $A^n-B^n$ whose proof we furnish for the sake of completion.
\begin{lem}\label{eqnorm}	
Let $K$ be a number field and $A, B \in K$ be non-zero algebraic integers.
If $(A,B)=1$ and $A/B$ is not a root of unity, then
\begin{equation}
\log|N_K(A^n - B^n)| = h(A/B) [K : \Q] ~n \(1+O\(\frac{\log(n+1)}{n}\) \),
\end{equation}
where the implied constant depends only on $d$, the degree of $\Q(A/B)$ over $\Q$. 
\end{lem}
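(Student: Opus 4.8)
The plan is to estimate $\log|N_K(A^n-B^n)|$ by summing the local (archimedean) contributions coming from the formula $|N_K(\alpha)| = \prod_{\sigma: K\hookrightarrow\mathbb C}|\sigma(\alpha)|$, after reducing to $A/B$. Since $(A,B)=1$, the ideal $(A^n-B^n)$ has trivial $\mathfrak p$-part whenever $\mathfrak p \mid B$ (as $A^n-B^n \equiv A^n \pmod{\mathfrak p}$ and $\mathfrak p \nmid A$), so writing $\gamma = A/B$ one gets $N_K(A^n-B^n) = N_K(B)^n \cdot N_K(\gamma^n - 1)$ in the sense of ideals up to units, but it is cleaner to work directly with absolute values: for each embedding $\sigma$, $|\sigma(A^n-B^n)| = |\sigma(B)|^n\,|\sigma(\gamma)^n - 1|$. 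Summing $\log$ over all $\sigma$ and over the appropriate product formula, the main term is $\sum_\sigma \log^+|\sigma(\gamma)|^n = n\sum_\sigma \log^+|\sigma(\gamma)| = n\,[K:\mathbb Q]\,h(\gamma)$ once we also fold in the finite places of $\gamma$; here one uses that $h(\gamma^n) = n\,h(\gamma)$ and the height formula \eqref{hgt}.

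First I would dispose of the finite places. Because $(A,B)=1$, any prime ideal $\mathfrak p$ of $\mathcal O_K$ dividing $A^n-B^n$ divides neither $A$ nor $B$, so $\nu_{\mathfrak p}(A^n-B^n) = \nu_{\mathfrak p}(\gamma^n-1)$ where $\gamma=A/B$ is a $\mathfrak p$-adic unit; summing $\nu_{\mathfrak p}(A^n-B^n)\log N_K(\mathfrak p)$ over such $\mathfrak p$ exactly reconstructs $\sum_{\mathfrak p}\max\{0,-\nu_{\mathfrak p}(\gamma^n-1)\}\log N_K(\mathfrak p)$ together with the contribution of the denominators of $\gamma^n$, i.e. the finite part of $[K:\mathbb Q]\,h(\gamma^n-1)$. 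Combining with the archimedean sum $\sum_\sigma\log|\sigma(\gamma)^n-1|$ and the factor $|\sigma(B)|^n$, the product formula for $B$ cancels the finite places of $B$, and we are left with $\log|N_K(A^n-B^n)| = [K:\mathbb Q]\,h_K(\gamma^n-1)$ plus a bounded-by-the-archimedean-discrepancy term. Then $h(\gamma^n-1)$ differs from $h(\gamma^n)=n\,h(\gamma)=n\,h(A/B)$ by at most $\log 2$ (by the triangle inequality $h(\gamma^n-1)\le h(\gamma^n)+h(1)+\log 2$ and $h(\gamma^n)\le h(\gamma^n-1)+\log 2$), which is $O(1)$, hence absorbed into the $O(\log(n+1))$.

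The one genuine subtlety — the step I expect to be the main obstacle — is showing the archimedean discrepancy $\sum_\sigma\bigl(\log|\sigma(\gamma)^n-1| - \log^+|\sigma(\gamma)^n|\bigr)$ is $O(\log(n+1))$ with the implied constant depending only on $d=[\mathbb Q(A/B):\mathbb Q]$. For embeddings with $|\sigma(\gamma)|$ bounded away from $1$ this difference is $O(1)$ uniformly (it is $\log|1-\sigma(\gamma)^{-n}|$ or $\log|\sigma(\gamma)^n-1|$ respectively, both $O(1)$), so the danger is embeddings with $|\sigma(\gamma)|$ very close to $1$, where $|\sigma(\gamma)^n-1|$ could be small. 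Here one invokes a Baker-type or Liouville-type lower bound, or more simply the fact that $\gamma$ is not a root of unity together with a quantitative estimate: $|\sigma(\gamma)^n - 1| \ge c(d)/(n+1)^{C(d)}$, for instance via the standard inequality bounding $|\xi^n-1|$ from below in terms of the height of $\xi$ and $n$ (or by noting $\gamma^n-1\ne 0$ is an algebraic integer-ish quantity whose norm is at least $1$ in absolute value after clearing denominators, forcing any single conjugate that is small to be compensated, giving a polynomial-in-$n$ lower bound). This yields $\log|\sigma(\gamma)^n-1| \ge -C(d)\log(n+1) + O(1)$, and an easy upper bound $\log|\sigma(\gamma)^n-1|\le n\log^+|\sigma(\gamma)| + \log 2$ handles the other direction; summing over the at most $[K:\mathbb Q]$ embeddings and recalling that the "bad" estimate really only depends on the degree $d$ of $\gamma$ (the conjugates of $\gamma$ over $\mathbb Q$ repeat across embeddings of $K$), one gets the claimed error term with constant depending only on $d$. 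Assembling the archimedean main term $n[K:\mathbb Q]h(A/B)$, the finite-place bookkeeping, and these $O(\log(n+1))$ discrepancies gives $\log|N_K(A^n-B^n)| = h(A/B)[K:\mathbb Q]\,n\bigl(1+O(\log(n+1)/n)\bigr)$, as required.
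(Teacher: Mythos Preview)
Your strategy --- decompose $\log|N_K(A^n-B^n)|$ over archimedean embeddings and control the discrepancy from the main term --- is essentially the paper's, but the step you flag as the obstacle has a genuine gap. The Liouville-style argument you sketch (clear denominators, use that the nonzero algebraic integer $A^n-B^n$ has norm at least $1$, so a small conjugate is compensated by the others) yields only an \emph{exponential} lower bound $|\sigma(\gamma)^n-1|\ge c^{-n}$, not the polynomial bound $c(d)(n+1)^{-C(d)}$ you assert; this is far too weak for an $O(\log(n+1))$ error. One genuinely needs a Baker-type input, and the paper cites precisely such a result (Bilu--Luca \cite[Cor.~4.2]{YFtri}): for each embedding $\sigma$,
\[
\log|\sigma(A)^n-\sigma(B)^n|=n\log\max\{|\sigma(A)|,|\sigma(B)|\}+O\bigl((h(A/B)+1)\log(n+1)\bigr),
\]
with the implied constant depending only on $d$. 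Note that this error term carries $h(A/B)$, not just $d$.

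This brings up the more serious omission. Summing the display over $\sigma$ and dividing by the main term $n[K:\Q]h(A/B)$ gives relative error $O\bigl((1+h(A/B)^{-1})\log(n+1)/n\bigr)$; to make the implied constant depend on $d$ alone, as the lemma requires, you must bound $h(A/B)^{-1}$ by a function of $d$. The paper does this explicitly via a Dobrowolski-type inequality, $h(A/B)\ge(4d(\log^*d)^3)^{-1}$. You never mention such a bound, and without it your constants silently depend on $A$ and $B$ rather than only on $d$. Incidentally, the paper's route is also more direct than yours: it stays with $A,B$ throughout and uses coprimality via the clean identity $\sum_\sigma\log\max\{|\sigma(A)|,|\sigma(B)|\}=[K:\Q]\,h(A/B)$ (the finite-place contribution to $h(A/B)$ vanishes since $(A,B)=1$), avoiding your detour through $h(\gamma^n-1)$ and the product formula for $B$.
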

\begin{proof}
Let $S$ be the set of all embeddings of $K$ in $\C$. For every $\sg \in S$ 
and for all natural numbers $n$, by \cite[Corollary 4.2]{YFtri}, we have
$$
\log|{\sg(A)}^n - {\sg(B)}^n| 
~=~ 
n \log\max\Big\{|\sg(A)|,~ |\sg(B)|\Big\}  ~+~ O\( \( h\(\frac{A}{B}\) + 1 \) \log (n+1) \),
$$
where the implied constant depends only on the degree $d$ of $\Q(A/B)$ 
over $\Q$. Now summing over $\sg \in S$, we get
$$
\log|N_K(A^n - B^n)| = n \sum_{\sg \in S} \log\max\Big\{|\sg(A)|, ~|\sg(B)|\Big\} 
~+~ O\( [K:\Q] \( h\(\frac{A}{B}\) + 1 \) \log (n+1) \).
$$
Since $(A, B) =1$, from (\ref{hgt}) we have
\begin{eqnarray*}
h\( \frac{A}{B} \) 
&=&
 \frac{1}{ [K : \Q]} \(\sum_{\sg \in S} \log\max\Big\{ 1, ~\Big| \sg\(\frac{A}{B}\) \Big| \Big\} 
~+~ \log N_K (B)\)  \\
&=& 
\frac{1}{ [K : \Q]} \sum_{\sg \in S} \log\max\Big\{ |\sg(A)|, ~|\sg(B)| \Big\}.
\end{eqnarray*}
We deduce that 
$$
\log|N_K(A^n - B^n)| ~=~ h\(\frac{A}{B}\) [K : \Q] ~n \(1+O\(\(1+\frac{1}{h\(\frac{A}{B}\)}\)\frac{\log(n+1)}{n}\) \)
$$
Now the lemma follows by noting that (see \cite{ {YGH}, {ED}, {Voth}})
$$
h\( \frac{A}{B} \) ~ \geq~ \frac{1}{4d (\log^*d)^3}~,
$$ 
where $\log^* = \max \{ 1, \log  \}$.
\end{proof}

\subsection{Statement of the Frey's ABC-conjecture}
A refined version of ABC-conjecture for number fields proposed by Frey 
\cite[A-B-C-Conjecture, p. 529]{Fr}
(see also \cite[p. 33]{Fr2}, \cite[p. 5]{MW})  
is stated below.

\begin{conj}\label{conj}
Let $K$ be a number field with ring of integers $\cO_K$ and discriminant $\Delta_K$. For any $\e > 0$ and any non-zero integral ideal $\fa$ in $\cO_{K}$ there exists a constant $c(\e, \log|\Delta_K|, \log N_K(\fa))$ such that for any pair of elements $A , B \in \cO_{K} \setminus\{0\}$ which generate the ideal $\fa$, we have
$$
\max\Big\{ h_K(A), ~h_K(B), ~h_K(A-B) \Big\} ~\leq~ (1+\e) \log \( \prod_{\fp | AB(A-B)} N_K (\fp)\) 
~+~ c(\e, \log|\Delta_K|, \log N_K(\fa)).
$$	
\end{conj}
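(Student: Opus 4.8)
The final statement is Frey's refined form of the $abc$-conjecture for number fields, so I should be candid at the outset: no unconditional proof is known, and I do not expect to produce one. Already over $\Q$ the assertion is equivalent in strength to the classical $abc$-conjecture of Masser and Oesterl\'e, which remains open; the number-field version is believed to be of the same depth. What can honestly be laid out is the shape a proof would have to take and the precise point at which every available technique fails.

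The plan is to first normalise away the common ideal and reduce to a unit equation. Writing $(A,B)=\fa$ and letting $S$ be the set of places of $K$ dividing $AB(A-B)\Delta_K$, each of $A$, $B$, $A-B$ is an $S$-unit, and setting $x = A/(A-B)$, $y = -B/(A-B)$ gives a solution of $x + y = 1$ with $x, y$ both $S$-units. The target inequality then becomes the statement that the heights of $x$ and $y$ are bounded, up to the factor $(1+\e)$, by $\log\big(\prod_{\fp \mid AB(A-B)} N_K(\fp)\big)$, with the remaining dependence on $K$ confined to $\log|\Delta_K|$ and on $\fa$ to $\log N_K(\fa)$. The primes dividing $\fa$ divide $AB$ and hence already appear in the radical, so this reduction faithfully accommodates the non-coprime case that the conjecture allows; it is exactly the $S$-unit-equation formulation of $abc$ over $K$.

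The second step would be to attack this unit equation, and here two classical tools are available, neither sufficient. The theory of linear forms in logarithms (Baker--W\"ustholz, Matveev), run through the Stewart--Yu method, yields an effective inequality in the correct variables, but with the radical to the power $(1+\e)$ replaced by an essentially exponential correction; these bounds are far too weak for the sharp constant demanded here. The Schmidt subspace theorem, by contrast, produces an inequality of exactly the right shape, with radical exponent arbitrarily close to $1$, but its constant is ineffective and, more seriously, it controls only those solutions lying outside a finite union of proper subspaces, so it cannot deliver a bound uniform over all pairs $(A,B)$ generating a fixed $\fa$.

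The hard part is therefore not an isolated lemma but the conjecture itself: bridging the gap between the ineffective, near-optimal subspace bound and the effective-but-weak Baker bound is precisely the content of $abc$, and no method is known that attains the exponent $1+\e$ uniformly while keeping the constant dependent only on $\log|\Delta_K|$ and $\log N_K(\fa)$. Consequently I would not attempt an unconditional proof; the most one can responsibly offer is the reduction above, which isolates the genuine difficulty as being inherited from the classical rational case. This is also why the paper states it as a hypothesis and only draws consequences from it, as in \thmref{proprad-int}, rather than establishing it.
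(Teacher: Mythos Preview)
Your assessment is correct: the statement is labelled a \emph{conjecture} in the paper and is never proved there; the paper merely records it (with attribution to Frey) and then assumes it as a hypothesis in \thmref{proprad-int}, \thmref{proprad-alg}, \thmref{corrad-int}, and \thmref{corradalg}. So there is no proof in the paper to compare against, and you are right not to attempt one.

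Your surrounding discussion --- the reduction to an $S$-unit equation $x+y=1$, the dichotomy between Baker-type bounds (effective but too weak) and the subspace theorem (sharp exponent but ineffective and non-uniform) --- is accurate and goes well beyond what the paper itself says, since the paper offers no commentary on why the conjecture is hard. That context is useful, but be aware it is your own addition: the paper simply states the conjecture, notes that the dependence of the constant on $\log|\Delta_K|$ and $\log N_K(\fa)$ is linear, and moves on to applications.
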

\begin{rmk}
As mentioned in \cite{Fr}, the dependence of $c(\e, \log|\Delta_K|, \log N_K(\fa))$ on $\log|\Delta_K|$ and $\log N_K(\fa))$ is linear. For simplicity, we use the notation $c(\e,\Delta_K,\fa)$ in place of $c(\e, \log|\Delta_K|, \log N_K(\fa))$. We also know that in a Dedekind domain, every non-zero ideal can be generated by at most two elements.
\end{rmk}
We will apply this conjecture to derive a lower bound for absolute norms of radicals of certain Fourier coefficients.

\medskip

\section{Number of prime factors of Fourier coefficients}

\subsection{Proof of \thmref{tau}}

For any prime $p$ and natural number $n$, the function $\tau$ satisfies the 
following binary recurrence relation
$$
\tau(p^{n+1}) = \tau(p) \tau(p^{n})- p^{11} \tau(p^{n-1}).
$$
Thus we have
\begin{equation}\label{trec}
\tau(p^n) = \frac{ \sa_{p}^{n+1} - \sb_{p}^{n+1} }{\sa_p - \sb_p} 
\quad\text{ for all } n \geq 0,
\end{equation}
where $\sa_p , \sb_p$ are the roots of the polynomial $x^2 - \tau(p)x+p^{11}$. 

From \lemref{gunity}, the set $P_{\tau}$ of primes $p$ such that $\gamma_{\tau, p}$ is not a root of unity has density 1. Fix a prime $p \in P_{\tau}$ and as before, let $\nu_{\tau, p} = \nu_p(\tau(p))$. Then by Deligne's bound, we have
$\nu_{\tau, p} \leq 5$. Set
$$
A_p = \frac{\ap}{p^{\nu_{\tau, p}}} \phantom{mm} \text{and} 
\phantom{mm} B_p = \frac{\bp}{p^{\nu_{\tau, p}}}.
$$
Then $A_p, B_p$ are roots of the polynomial $G_{\tau}(x) = x^2 - \tau(p) p^{-\nu_{\tau, p}} ~x+ p^{11- 2 \nu_{\tau, p}}$. Note that  $\nu_{p}(\tau(p)p^{-\nu_{\tau, p}})=0$ and $11-2\nu_{\tau, p}\geq 1$, since $\nu_{\tau, p} \leq 5$. Hence we have $G_{\tau}(x) \in \Z[x]$ and $(A_p , B_p)$ is a Lucas pair.
Now consider the sequence $(u_n)_{n=0}^{\infty}$ in $\Z$ defined by
$$
u_n = u_n(A_p , B_p) = \frac{A_{p}^{n} - B_{p}^{n}}{A_p - B_p}
\quad \text{ for all } n \geq 0.
$$
Using \eqref{trec}, we have $u_{n+1} =  \tau(p^n)p^{-n\nu_{\tau, p}}$.
We know that $u_t | u_n$ whenever $t | n$ (see \cite{St2}) and by \thmref{primdiv}, 
$u_n$ admits a primitive divisor  if  $n$ is sufficiently large (indeed if $n > 30$ ). Hence for large $n$, for each divisor $t$ of $n$ with $t > 30$, we get a distinct prime divisor of $u_n$.
Hence follows that $\w(u_n) \geq d(n) + O(1)$, where $d(n)$ denotes the number
of divisors of $n$ ( Similar lower bound is carried out for $\Omega(\tau(n))$ in \cite{BCOT} ). Now for any large natural number $r$, we choose
\begin{equation}\label{e-t}
n = \prod_{q \leq r \atop {q  \text{ prime}}} q.
\end{equation}
By the prime number theorem (see \cite{TA}), we then have
$$
\log n ~=~ \sum_{q \leq r \atop{q \text{ prime}}} \log q ~=~ (1+o(1))r
$$
as $r \rightarrow \infty$. Thus for any rational prime $p$ for which $\gamma_{\tau, p}$ is not a root of unity
and for sufficiently large $n$ as in~\eqref{e-t} (depending on $\e$), we get
$$
\w(\tau(p^{n-1})) \geq \w(u_n)\geq 2^{\pi(r)} +O(1) \geq 2^{(1-\e)\frac{\log n}{\log\log n}}.
$$
This completes the proof of \thmref{tau} by replacing $n-1$ with $n$.

\subsection{Proof of \rmkref{rmk1.1}} Let the notations be as before.  
Let $\e >0$ be a real number. From the proof of \thmref{tau}, we have for each prime $
p \in P_{\tau}$, the inequality
\begin{equation}\label{eqrmk}
\w(\tau(p^n)) \geq 2^{(1-\e)\frac{\log n}{\log\log n}}
\end{equation}
holds for infinitely many natural numbers $n$.

Note that $\tau(2)=-24$ and $\sa_2= -12+4i\sqrt{119}, \sb_2 = -12-4i\sqrt{119}$ 
are the roots of the  polynomial $x^2+24x+2^{11}$. Thus we have
$$
\gamma_{\tau, 2}= \frac{\sa_2}{\sb_2}= \frac{-55-3i\sqrt{119}}{64}
$$
Clearly $\gamma_{\tau, 2} \in \Q(\sqrt{-119})$ is not an algebraic integer and hence 
not a root of unity. 

This implies that $2 \in P_{\tau}$. From \eqref{eqrmk}, we get
\begin{equation}
\w(\tau(2^n)) \geq 2^{(1-\e)\frac{\log n}{\log\log n}}
\end{equation}
for infinitely many natural numbers. By choosing $m=2^n$, we deduce that
\begin{equation}
	\w(\tau(m)) \geq 2^{(1-2\e)\frac{\log\log m}{\log\log\log m}}
\end{equation}
for infinitely many natural numbers $m$.

\subsection{Proof of \thmref{EHE} }
Let $f$  be a non-CM primitive cusp form of weight $k$, level $N$ with trivial character having Fourier coefficients $\{a_f(n) ~|~ n \in \N \}$. We know that 
$a_f(n)$'s are real algebraic integers and $\K_f = \Q(\{a_f(n) ~|~n\in \N \})$ is a 
number field (see \cite{GS}). Let $\cO_{\K_{f}}$ be the ring of integers of $\K_{f}$ 
and $\w_f(a_f(n))$ be as defined in \thmref{EHE}. For any prime number $p \nmid N$
and natural number $n$, we have the following recurrence formula
$$
a_f({p}^{n+1}) = a_f(p) a_f(p^{n}) - p^{k-1} a_f(p^{n-1}).
$$
Thus for all $n \ge 0$, we have
$$
a_f(p^{n}) = \frac{ \sa_{p}^{n+1} -  \sb_{p}^{n+1}}{\ap - \bp} ~, 
$$
where $\ap, \bp$ are the roots of the polynomial $x^2 -a_f(p)x+ p^{k-1}$.
As before, from \lemref{gunity}, the set $P_f$  of primes $p \nmid N$ such that $\gamma_{f, p} = \ap / \bp$ is not a root of unity has density $1$. Fix a prime $p \in P_f$.
When $\K_f = \Q$, we proceed as in the proof of \thmref{tau} and
get the result. Now suppose that $\K_f \neq \Q$. Consider
the binary recurrent sequence $(u_n)_{n=0}^{\infty}$ in $\K_f$
defined by 
$$
u_n ~=~ u_n(\ap, \bp) ~=~ \frac{{\sa}_{p}^{n} - {\sb}_{p}^{n}}{\ap - \bp} ~, \quad n \geq 0.
$$
When $t | n$, we have $u_t | u_n$. Applying \thmref{primdiv}, we know that $u_n$ admits a 
primitive divisor for any $n$ sufficiently large (depending on $f$ and $p$). 
Choose sufficiently large $n$ as in \eqref{e-t} (depending on $\e$, $f$, $p$) and proceed as in \thmref{tau} to get
$$
\w_f(a_f(p^{n-1})) \geq 2^{(1-\e) \frac{\log n}{\log\log n}}.
$$
This completes the proof of \thmref{EHE}.

\subsection{Proof of \thmref{thm1}} 
Let $\bf$ be a non-CM primitive Hilbert cusp form of weight $2\bk$, level $\fn$ with trivial 
character having Fourier coefficients $c(\fm), ~ \fm \subset \cO_F$. 
We know that $c(\fm)$'s are real algebraic integers and 
$\K_{\bf}= \Q(\{ c(\fm) ~|~ \fm \subset \cO_F\})$ is a number field (see \cite{Sh}).
Let $\cO_{\K_{\bf}}$ is the ring of integers of $\K_{\bf}$,
For any prime ideal $\fp$ in $\cO_F$ with $\fp \nmid \fn\fD_F$ and for any natural number
$n$, we have
$$
c({\fp}^{n+1}) = c(\fp) c({\fp}^n) - {N_F(\fp)}^{2k-1} c({\fp}^{n-1}).
$$
Hence for any integer $n \ge 0$, we have
$$
c({\fp}^n) = \frac{{\sa}_{\fp}^{n+1} - {\sb}_{\fp}^{n+1}}{\afp - \bfp}~,
$$
where $\afp, \bfp$ are the roots of the polynomial $x^2 - c(\fp)x + {N_F(\fp)}^{2k-1}$.

From \lemref{gunityH}, the set of prime ideals $\fp \subset \cO_F$ such that $\fp \nmid \fn \fD_F$ and $\gamma_{\bf, \fp}$ is not a root of unity has density equal to $1$. Fix a prime ideal $\fp \in P_{\bf}$ and let $N_{F}(\fp) = p^{\ell}$.
Consider the binary recurrent sequence $(u_n)_{n=0}^{\infty}$ in $\K_{\bf}$
defined by 
$$
u_n ~=~ u_n(\afp, \bfp) ~=~  \frac{{\sa}_{\fp}^{n} - {\sb}_{\fp}^{n}}{\afp - \bfp}
 \quad \text{ for all }n \geq 0.
$$
When $t | n$, we have $u_t | u_n$. Applying \thmref{primdiv}, we know that $u_n$ admits a 
primitive divisor for any $n$ sufficiently large (depending on $\bf$ and $\fp$). 
Choose sufficiently large $n$ as in \eqref{e-t} (depending on $\e$, $\bf$, $\fp$) and proceed as in \thmref{tau} to get
$$
\w_{\bf} (c({\fp}^{n-1})) \geq 2^{(1-\e) \frac{\log n}{\log\log n}}.
$$
This completes the proof of \thmref{thm1}.

\medskip

\section{Radical of Fourier coefficients}
Let $p_i$ denotes the $i$-th rational prime number. For any non-zero integer $n$, 
we have the following inequality
$$
Q(n) \geq \prod_{p < p_{\w(n)}} ~p.
$$
Let $(a_n)_{n=1}^{\infty}$ be a sequence of integers such that 
$\w(a_n) \rightarrow \infty$ as $n \rightarrow \infty$. We have
$$
\log(Q(a_n)) \geq   \sum_{p < p_{\w(a_n)} } \log p.
$$
By the prime number theorem (see \cite{TA}), it follows that 
\begin{equation}
\label{eq1}
\log(Q(a_n)) \geq   \sum_{p < p_{\w(a_n)} } \log p ~=~ (1+o(1))~~ p_{\w(a_n)} ~
=~(1+o(1)) ~~\w(a_n) \log\w(a_n)
\end{equation}
as $n \rightarrow \infty$.

Let $K$ be a number field with ring of integers $\cO_K$. We list all 
prime ideals in $\cO_K$ in the increasing order of their absolute norm (if there are
several ideals with the same norm, choose any ordering among them).
Let $\fp_i$ denotes the $i$-th prime ideal in $\cO_{K}$. For an algebraic number 
$\sa \in \cO_K$, let $\w_K(\sa)$ be the number of distinct prime ideals dividing $\sa \cO_K$  
with the convention that $\w_K(\sa) =~0$ if $\sa=0$ or is a unit in $\mathcal{O}_K$.

We have the following inequality
$$
Q_K(\sa) \geq \prod_{i=1}^{\w_K(\sa)} N_K(\fp_i)
$$

Let $(\sa_n)_{n=1}^{\infty}$ be a sequence of algebraic integers in 
$K$ such that $\w_K(\sa_n) \rightarrow \infty$ as $n \rightarrow \infty$.

As before, by applying Landau's prime ideal theorem, we deduce that
\begin{equation}
\label{eq2}
\log Q_K(\sa_n) \geq (1+o(1)) \w_K(\sa_n) \log\w_K(\sa_n).
\end{equation}
as $n \rightarrow \infty$.

\subsection{Proof of \corref{cor1} and \corref{cor2}}
\corref{cor1} and \corref{cor2} now follows from equations \eqref{eq1}, \eqref{eq2} 
and \thmref{tau}, \thmref{EHE} and  \thmref{thm1}.

\smallskip
\section{Frey's ABC-conjecture and Radical of $A^n-B^n$}

\subsection{Proof of \thmref{proprad-int}}

By hypothesis, $A, B \in \cO_K$ be such that $(A,B) = 1$,  $A/B$ is not a root 
of unity and $h(A) \geq h(B)$.  Also let $\e > 0$ be arbitrary and $n \in \N$. 
By Frey's refined ABC-conjecture, we have
$$
\max\Big\{h_K(A^n), ~h_K(B^n), ~h_K(A^n -B^n)\Big\} \leq 
~(1+\e)~ \log \( \prod_{\fp | AB(A^n - B^n)} N_K (\fp)\) ~+~ c_1(\e, \Delta_K).
$$
Since $h_K(A^n) = n h_K(A)$, we get
$$
e^{n h_K(A)} ~\leq ~ e^{c_1(\e, \Delta_K)}  (Q_K(AB))^{1+\e} (Q_K(A^n -B^n))^{1+\e}.
$$
Hence we deduce that
\begin{equation}\label{eqA}
Q_K(A^n -B^n) \geq \frac{e^{-c_1(\e, \Delta_K)}}{Q_K(AB)} ~ e^{(1-\e) [K : \Q] h(A) n}
\end{equation}
for all natural numbers $n$. Here $c_1(\e, \Delta_K)$ is a positive constant 
depending only on $\e$ and the discriminant $\Delta_K$ of the field $K$.

\subsection{Proof of \rmkref{rmkrad-int}}

Let the notations be as in \thmref{proprad-int}. We note that 
$$
Q_K\(\frac{A^n-B^n}{A-B} \) \geq \frac{Q_K(A^n - B^n)}{Q_K(A-B)}.
$$
From \eqref{eqA}, we get
\begin{equation}\label{eqAL}	
Q_K\(\frac{A^n-B^n}{A-B} \) \geq \frac{e^{-c_1(\e, \Delta_K)}}{Q_K( AB(A-B)^2 )} 
~ e^{(1-\e) [K : \Q] h(A) n}
\end{equation}
Suppose that $K_0$ is a number field containing $AB, A+B$ and hence contains the sequence
 $\frac{A^n - B^n}{A-B}$ for all $n \in \N$. Now choose $K = K_0 (A)$. 
 For any algebraic integer $C \in K_0$, we have 
\begin{equation}\label{eqradG}
Q_K(C) \leq \( Q_{K_0}(C) \)^{[K : K_0]}.
\end{equation}
From \eqref{eqAL} and \eqref{eqradG}, we conclude that for all natural numbers $n$,
\begin{equation}\label{eqL0A}
Q_{K_0}\(\frac{A^n-B^n}{A-B} \) \geq \frac{e^{-c_1(\e, \Delta_K)}}
{Q_{K_0}( AB(A-B)^2 )} ~ e^{(1-\e) [K_0 : \Q] h(A) n}.
\end{equation}

\medskip
\subsection{Proof of \thmref{proprad-alg}}
Let $A, B$ be non-zero algebraic integers in $\cO_K$ 
such that $A/B$ is not a root of unity and $(A, B)=\cD$.
Also let $K_1$ be an extension of $K$ of smallest degree such that $\cD$ is 
a principal ideal in $K_1$, say $\cD \cO_{K_1} = (D)$. Set 
$A_1 = \frac{A}{D}, ~B_1 = \frac{B}{D}$ and hence $(A_1, B_1) = 1$
in $\cO_{K_1}$. 

Let $\e > 0$ be arbitrary and $n \in \N$. By Frey's refined ABC-conjecture, we have
$$
\max\Big\{ h_{K_1}(A_{1}^{n}),~ h_{K_1}(B_{1}^{n}),~ h_{K_1}(A_{1}^{n} -B_{1}^{n})\Big\} 
~\leq 
(1+\e) ~\log \( \prod_{\fp | A_1 B_1 (A_{1}^{n} -B_{1}^{n})} N_{K_1} (\fp)\) ~+~ c_1(\e, \Delta_{K_1}).
$$
From \eqref{hgt}, $h_{K_1}(A_{1}^{n} -B_{1}^{n}) \geq \log |N_{K_1}(A_{1}^{n} -B_{1}^{n})|$, thus we get
$$
|N_{K_1}(A_{1}^{n} -B_{1}^{n})| \leq e^{c_1(\e, \Delta_{K_1})}  
(Q_{K_1}(A_1 B_1))^{1+\e} (Q_{K_1}(A_{1}^{n} -B_{1}^{n}))^{1+\e}
$$
From \lemref{eqnorm}, we deduce that
\begin{equation}\label{Q1}
Q_{K_1}(A_{1}^{n} -B_{1}^{n}) \geq \frac{e^{-c_1(\e, \Delta_{K_1})}}
{Q_{K_1}(A_1 B_1)} ~ e^{(1-2\e) [K_1 : \Q] h\(\frac{A_1}{B_1}\) n}
\end{equation}
for all sufficiently large natural numbers $n$ depending only on $\e$ and degree of $\Q(A/B)$ over $\Q$.
Since $A^n - B^n = {D}^n (A_{1}^{n} -B_{1}^{n})$ and $AB = D^2 A_1B_1$, thus we get
\begin{equation}\label{Q2}
\begin{split}
& Q_{K_1}(A_{1}^{n} -B_{1}^{n}) \leq Q_{K_1}(A^n -B^n) \leq (Q_K(A^n -B^n))^{[K_1 : K]} ~,\\
& Q_{K_1}(A_1 B_1) \leq Q_{K_1}(AB) \leq (Q_{K}(AB))^{[K_1 : K]} .
\end{split}
\end{equation}
Hence we conclude that
\begin{equation}\label{Q3}
Q_{K}(A^n -B^n) \geq \frac{e^{-c_1(\e, \Delta_{K_1})}}{Q_K(AB)}
~ e^{(1-2\e) [K : \Q] h\(\frac{A}{B}\) n}
\end{equation}
for all sufficiently large natural numbers $n$ depending only 
on $\e$ and degree of $\Q(A/B)$ over $\Q$.

\subsection{Proof of \rmkref{rmkrad-alg}}

Let the notations be as in \thmref{proprad-alg}. From \eqref{Q1},  we have 
\begin{equation}\label{Q4}
Q_{K_1} \(\frac{A_1^n -B_1^n}{A_1 - B_1}\)  
\geq \frac{e^{-c_1(\e, \Delta_{K_1})}}{Q_{K_1}(A_1B_1(A_1- B_1)^2)} ~ e^{(1-2\e) [K_1 : \Q]h\(\frac{A}{B}\) n} 
\end{equation}
for all sufficiently large natural numbers $n$ depending only on $\e$ and degree 
of $\Q(A/B)$ over $\Q$. 
Since 
\begin{equation}
\frac{A^n -B^n}{A - B} = D^{n-1} \frac{A_1^n -B_1^n}{A_1 - B_1} \quad \text{and} \quad
 AB(A-B)^2 = D^4 A_1 B_1 (A_1-B_1)^2.
\end{equation}
Thus we get
\begin{equation}\label{Q5}
\begin{split}
&Q_{K_1} \(\frac{A_1^n -B_1^n}{A_1 - B_1} \) \leq \( Q_K\(\frac{A^n -B^n}{A - B} \)\)^{[K_1:K]} \\
&Q_{K_1}(A_1 B_1 (A_1-B_1)^2) \leq \(Q_K(AB(A-B)^2)\)^{[K_1:K]}.
\end{split}
\end{equation}
From \eqref{Q4} and \eqref{Q5}, we get
\begin{equation}\label{eqLKB}
Q_K \(\frac{A^n -B^n}{A - B}\)  
~\geq~ 
\frac{e^{-c_1(\e, \Delta_{K_1})}}{Q_{K}(AB(A - B)^2)} ~ e^{(1-2\e) [K : \Q] 
h\(\frac{A}{B}\) n}.
\end{equation}
Suppose that $K_0$ is a number field containing $AB, A+B$ and hence contains 
the sequence $\frac{A^n - B^n}{A-B}$ for all $n \in \N$. Let $K= K_0(A)$ 
and $K_1$ be an extension of $K$ of smallest degree such that  
$(A, B)$ is principal ideal in $K_1$. From \eqref{eqradG} and \eqref{eqLKB}, 
we conclude that
\begin{equation}\label{eqL0B}
Q_{K_0}\(\frac{A^n -B^n}{A-B}\) 
~\geq~ 
\frac{e^{-c_1(\e, \Delta_{K_1})}}{Q_{K_0}(AB(A-B)^2)} 
~ e^{(1-2\e) [K_0 : \Q] h\(\frac{A}{B}\) n}
\end{equation}
for all sufficiently large natural numbers $n$ depending only on $\e$ 
and degree of $\Q(A/B)$ over $\Q$.

\medskip

\section{Frey's ABC-conjecture and Radical of Fourier coefficients}
In this section  assuming Frey's refined ABC-conjecture, we derive 
lower bound on the norm of radical of certain Fourier coefficients of elliptic modular forms as well as Hilbert modular forms.

\subsection{Proof of \thmref{corrad-int} for $\tau$ function}
Let the notations be as in \S1, \S2 , \S3 and \S4.
Let $p$ be a rational prime such that $\gamma_{\tau, p}$ is not a root of unity. 
For any natural number $n$, we have
\begin{equation}\label{tAp}
\tau(p^n) 
~=~ \frac{{\sa}_{p}^{n+1} - {\sb}_{p}^{n+1}}{\sa_p - \sb_p}
~=~ p^{n \nu_{\tau, p}} \cdot \frac{A_{p}^{n+1} - B_{p}^{n+1}}{A_p - B_p}~,
\end{equation}
where $A_p, B_p$ are the roots of the irreducible polynomial $x^2 - \tau(p) 
p^{- \nu_{\tau, p}}x + p^{11 -2 \nu_{\tau, p}}$ over $\Z$. From \eqref{hgt}, we have 
\begin{equation}\label{htAp}
h(A_p) = h(B_p) = \frac{ \log|A_p|+ \log|B_p|}{2} =\log|A_p|= \( \frac{11}{2} - \nu_{\tau, p}\) \log p.
\end{equation}
From \rmkref{rmkrad-int} and \eqref{htAp}, we have
\begin{equation}\label{eqradAp}
Q\( \frac{A_{p}^{n+1} - B_{p}^{n+1}}{A_p - B_p} \) 
\geq  \frac{c(\e, \Delta_{\tau, p})}{Q(A_p B_p (A_p-B_p)^2)} 
~ p^{(1-\e)\( \frac{11}{2} - \nu_{\tau, p}\) (n+1)}
\end{equation}
where $c(\e, \Delta_{\tau , p})$ is a positive constant depending on 
$\e$ and the discriminant of the field $\Q(\ap)$. 

We will show by induction that
\begin{equation}\label{tauv}
\nu_p(\tau(p^n)) =  n~\nu_{\tau, p}~,
\end{equation}
for all natural numbers $n$. For $n=1$, it follows from definition. Let $n >1$ and assume that 
$$
\nu_p(\tau(p^m)) =  m~\nu_{\tau, p} \phantom{mm} \text{for} \phantom{mm} m= 1, 2,\cdots, n-1.
$$ 
Since $\tau(p^n) = \tau(p) \tau(p^{n-1}) - p^{11} \tau(p^{n-2})$, by induction 
hypothesis, we get 
$$
\nu_p(\tau(p)\tau(p^{n-1})) = n \nu_{\tau, p} \phantom{mm} \text{and} 
\phantom{mm} \nu_p( p^{11} \tau(p^{n-2})) = 11+(n-2)\nu_{\tau, p}.
$$ 
By Deligne's bound, we have $\nu_{\tau, p} \leq 5$. Thus 
we get $\nu_p(\tau(p)\tau(p^{n-1})) <  \nu_p( p^{11} \tau(p^{n-2}))$ and 
hence it follows that $\nu_p(\tau(p^n)) =  n~\nu_{\tau, p}$. Thus by induction we conclude that
$$
\nu_p(\tau(p^n)) =  n~\nu_{\tau, p} ~,
$$
for all natural numbers $n$. 

Now suppose that $\nu_{\tau, p} = 0$. Then we have $A_p B_p = p^{11}$ 
and $(A_p - B_p)^2 = \tau(p)^2 - 4 p^{11}$. Thus we get
\begin{equation}\label{QtAB}
Q(A_p B_p (A_p - B_p)^2) = p \cdot Q({\tau(p)}^2 - 4 p^{11}).
\end{equation}
From \eqref{tAp}, \eqref{eqradAp} and \eqref{QtAB}, we deduce that
$$
Q(\tau(p^n)) \geq \frac{c(\e, \Delta_{\tau, p})}{Q({\tau(p)}^2 - 4 p^{11})}~~ p^{\frac{11}{2}(1-\e) (n+1) - 1 }
$$
for all natural numbers $n$. 

If $\nu_{\tau, p} \neq 0$, then we have 
$$
Q((A_p-B_p)^2) = Q( \tau(p)^2 p^{-2\nu_{\tau, p}}  - 4p^{11-2\nu_{\tau, p}}) 
~=~
 \frac{Q({\tau(p)}^2 - 4 p^{11})}{p}~.
$$
Thus we get
\begin{equation}\label{QtAB2}
Q(A_p B_p (A_p - B_p)^2) = Q({\tau(p)}^2 - 4 p^{11}).
\end{equation}
From \eqref{tAp}, \eqref{eqradAp}, \eqref{tauv} and \eqref{QtAB2}, we deduce that
$$
Q(\tau(p^n)) \geq \frac{c(\e, \Delta_{\tau, p})}{Q({\tau(p)}^2 - 4 p^{11})}~~ p^{(1-\e)  \(\frac{11}{2}- \nu_{\tau, p}\) (n+1) + 1 }
$$
for all natural numbers $n$.

\medskip
\subsection{Proof of \thmref{corrad-int} for non-CM primitive cusp forms}
Let $f$  be a non-CM primitive cusp form of weight $k$, level $N$ with trivial character having integer Fourier coefficients $\{a_f(n) ~|~ n \in \N \}$. For any prime $p \nmid N$ and any natural number 
$n$, we have
\begin{equation}\label{fLap}
a_f(p^{n}) = \frac{ \sa_{p}^{n+1} -  \sb_{p}^{n+1}}{\ap - \bp}. 
\end{equation}
As before $\nu_{f, p} = \nu_p(a_f(p))$. By Deligne's bound, we 
have $\nu_{f, p} \leq k/2 -1$ for $p > 3$. 
Now onwards, assume that $p$ is a prime such that $p \nmid N , p>3$ and $\gamma_{f, p}$ is not a root of unity. Set
$$
A_p = \frac{\sa_p}{p^{\nu_{f, p}}} \phantom{mm} \text{and} 
\phantom{mm} B_p = \frac{\sb_p}{p^{\nu_{f, p}}}~,
$$
where $A_p , B_p$ are the roots of the irreducible polynomial 
$x^2-a_f(p) p^{-\nu_{f, p}} ~x+ p^{k-1-2\nu_{f, p}}$ over $\Z$. 
From \eqref{hgt}, we have
\begin{equation}\label{hfAp}
h(A_p) = h(B_p) = \frac{ \log|A_p|+ \log|B_p|}{2} =\log|A_p|= \( \frac{k-1}{2} - \nu_{f, p}\) \log p.
\end{equation}
From \eqref{fLap}, we have
\begin{equation}
a_f(p^n) = p^{n\nu_{f, p} } \cdot \frac{A_{p}^{n+1} - B_{p}^{n+1}}{A_p - B_p} .
\end{equation}
As before, we can show that  
$$
\nu_p(a_f(p^n)) = n~\nu_{f, p}
$$
for all natural numbers $n$.
Now proceeding as in the case of 
$\tau$ function, we deduce that
$$
Q(a_f(p^n)) \geq \frac{c(\e, \Delta_{f, p})}{Q(a_f(p)^2 - 4p^{k-1})}
~~ p^{(1-\e)\(\frac{k-1}{2} - \nu_{f, p}\) (n+1)~+~\delta_{f, p}} 
$$
for all natural numbers $n$. Here $\delta_{f, p}$ be as in \thmref{corrad-int}.

\medskip
\subsection{Proof of \thmref{corrad-int} for non-CM primitive Hilbert cusp forms}
Let $\bf$ be a non-CM primitive Hilbert cusp form of weight $2\bk$, level $\fn$ with 
trivial character having integer Fourier coefficients $c(\fm), ~ \fm \subset \cO_F$. For any 
prime ideal $\fp$ in $\cO_F$ with $\fp \nmid \fn\fD_F$ and for any natural number $n$, we have
\begin{equation}\label{bfLap}
c({\fp}^n) = \frac{{\sa}_{\fp}^{n+1} - {\sb}_{\fp}^{n+1}}{\afp - \bfp}.
\end{equation}
As before  $N_F(\fp) = p^{\ell}$ and $\nu_{\bf, \fp} = \nu_p(c(\fp))$. By 
Blasius's bound, we have 
$\nu_{\bf, \fp} \leq \(\frac{2k-1}{2}\)\ell - \frac{1}{2}$ for $p > 3$. 
Now onwards, let us assume that $\fp$ is a prime ideal in $\cO_F$ 
such that $\fp \nmid \fn\fD_F, p> 3$ and $\gamma_{\bf, \fp}$ is not a root of unity. Set
$$
A_{\fp} = \frac{\sa_{\fp}}{p^{\nu_{\bf, \fp}}}
 \phantom{mm}\text{and} \phantom{mm} B_{\fp} = \frac{\sb_{\fp}}{p^{\nu_{\bf, \fp}}},
$$
where $A_{\fp}, B_{\fp}$ are the roots of the irreducible 
polynomial $x^2-c(\fp) p^{-\nu_{\bf, \fp}} ~x+ p^{(2k-1)\ell - 2\nu_{\bf, \fp}}$
over $\Z$. From \eqref{hgt}, we have
\begin{equation}\label{hbfAp}
h(A_{\fp}) = h(B_{\fp}) =\log|A_{\fp}|= \( \frac{2k-1}{2} - \frac{\nu_{\bf, \fp}}{\ell}\) \log N_F(\fp).
\end{equation}
From \eqref{bfLap}, we have
\begin{equation}
c({\fp}^n) = p^{n \nu_{\bf, \fp}} \cdot \frac{A_{\fp}^{n+1} - B_{\fp}^{n+1}}{A_{\fp} - B_{\fp}}.
\end{equation}
As before we can show that 
$$
\nu_p(c({\fp}^n)) = n~\nu_{\bf, \fp}
$$
for all natural numbers $n$. Now proceeding as in the case of 
$\tau$ function, we deduce that
$$
Q(c({\fp}^n)) \geq \frac{c(\e, \Delta_{\bf, \fp})}{Q(c(\fp)^2 - 4 N_F(\fp)^{2k-1})}~
~ {N_F(\fp)}^{(1-\e)\(\frac{2k-1}{2} - \frac{\nu_{\bf, \fp}}{\ell} \)(n+1)
~+~ \frac{\delta_{\bf, \fp}}{\ell}}
$$
for all natural numbers $n$. Here $\delta_{\bf, \fp}$ be as in \thmref{corrad-int}.

\subsection{Proof of \thmref{corradalg} for non-CM primitive cusp forms}
Let $f$ be as in \thmref{EHE}. Let $p$ be a prime such that $p \nmid N$ and $\gamma_{f, p}$ is not a root of unity. For any natural number $n$, we have
\begin{equation}\label{fLap2}
a_f(p^{n}) = \frac{ \sa_{p}^{n+1} -  \sb_{p}^{n+1}}{\ap - \bp}. 
\end{equation}
Let $\e > 0$ be arbitrary. As before $\gamma_{f, p} = \sa_{p}/\sb_{p}$.
Using \eqref{fLap2} and applying \rmkref{rmkrad-alg} with
$K_0 = \K_f$, we get
\begin{equation}
	Q_f(a_f(p^n)) \geq c(\e, f, p) ~~ e^{(1-\e) [\K_f : \Q] h(\gamma_{f, p}) n}
\end{equation}
for all sufficiently large natural numbers $n$ depending on $\e$ and the degree 
of $\Q(\gamma_{f, p})$ over $\Q$. Here $c(\e, f, p)$ is a positive 
constant depending on $\e, f$ and $p$.

\medskip
\subsection{Proof of \thmref{corradalg} for non-CM primitive Hilbert cusp forms}

Let $\bf$ be as in \thmref{thm1}. 
Let $\fp$ be a prime ideal in $F$ such that $\fp \nmid \fn\fD_F$ and 
$\gamma_{\bf, \fp}$ is not a root of unity.
For any natural number $n$, we have
\begin{equation}\label{bfLap2}
c({\fp}^n) = \frac{{\sa}_{\fp}^{n+1} - {\sb}_{\fp}^{n+1}}{\afp - \bfp}.
\end{equation}
Let $\e > 0$ be arbitrary. As before $\gamma_{\bf, \fp} = \sa_{\fp} / \sb_{\fp}$. 
Using \eqref{bfLap2} and applying \rmkref{rmkrad-alg} with
$K_0 = \K_{\bf}$, we get
\begin{equation}
Q_{\bf}\(c({\fp}^n)\) \geq c(\e, \bf, \fp) ~~e^{(1-\e) [\K_{\bf}: \Q]h(\gamma_{\bf, \fp}) n}
\end{equation}
for all sufficiently large natural numbers $n$ depending on $\e$ and the degree 
of $\Q(\gamma_{\bf, \fp})$ over $\Q$. Here $c(\e, \bf, \fp)$ is a 
positive constant depending on $\e, \bf$ and $\fp$.

\section*{Acknowledgments}
The author would like to thank Sanoli Gun and Purusottam Rath for their 
support and guidance. The author would also like to thank the referee 
for helpful suggestions. The author would also like to acknowledge the support
 of DAE number theory plan project.

\end{document}